\documentclass[11pt,a4paper,reqno]{amsart}
\usepackage{amsaddr}
\usepackage{amsmath,amsfonts,amssymb,amsthm,mathrsfs}
\usepackage[utf8]{inputenc}
\usepackage[T1]{fontenc}
\usepackage{lmodern}
\usepackage{latexsym}
\usepackage{cancel}
\usepackage{microtype}
\usepackage{caption}
\usepackage{MnSymbol}
\usepackage{xcolor}
\usepackage{enumerate}
\usepackage[normalem]{ulem}
\usepackage{bbm}
\usepackage{geometry}
\usepackage{marginnote}

\usepackage[colorlinks=true,urlcolor=blue,
linkcolor=blue,citecolor=blue]{hyperref}
\usepackage{mffbsde-style}

\usepackage{graphics,tikz}
\usepackage{caption}

\newcommand{\sgn}{\mathrm{sgn}}

\def\XXint#1#2#3{{\setbox0=\hbox{$#1{#2#3}{\int}$ }
\vcenter{\hbox{$#2#3$ }}\kern-.6\wd0}}

\newcommand{\essinf}{\mathop{\mathrm{ess\,inf}}}
\newcommand{\esssup}{\mathop{\mathrm{ess\,sup}}}

\newtheorem{theorem}{\bf Theorem}[section]
\newtheorem{proposition}[theorem]{\bf Proposition}

\theoremstyle{definition}
\newtheorem{definition}[theorem]{Definition}

\newtheorem{remark}[theorem]{Remark}

\numberwithin{equation}{section}

\begin{document}

\papertitle[MF doubly reflected FBSDEs]{Mean-Field Doubly Reflected Forward--Backward SDEs with Optional Barriers and $L^p$-Data}
\paperauthors{%
  \AuthorBlock{Erhan Bayraktar\textsuperscript{*}}{Department of Mathematics\\ University of Michigan}{erhan@umich.edu}
  \hfill
  \AuthorBlock{Maurycy Rzymowski}{Faculty of Mathematics and Computer Science\\ Nicolaus Copernicus University}{maurycyrzymowski@mat.umk.pl}
}
\paperabstract{We study mean-field doubly reflected forward-backward stochastic differential equations with two optional barriers satisfying a strong Mokobodzki condition. For $L^p$-data, $p\in(1,2]$, we prove existence and uniqueness on sufficiently short time horizons when the coefficients may depend on the joint law of $(X,Y,Z)$. Under an additional monotonicity condition and using an exponentially weighted norm, we also obtain a global-in-time result for $p=2$. The setting is motivated by recursive mean-field Dynkin games and game-option valuation with irregular payoff barriers.}
\paperkeywords{Mean-field FBSDEs, doubly reflected BSDEs, optional barriers, Dynkin games, Mokobodzki condition}
\papermsc{Primary 60H10; Secondary 60H30, 91A15, 91G80}

\makepaperfrontmatter
\footnotetext[1]{Erhan Bayraktar is supported in part by the National Science Foundation Grant in Applied Mathematics \href{https://www.nsf.gov/awardsearch/showAward?AWD_ID=2507940\&HistoricalAwards=false}{DMS-2507940}.}

\section{Introduction}

Let $W$ be a standard $d$-dimensional Brownian motion on a probability space $(\Omega,\mathcal{F},P)$, let $T>0$, and let $\mathbb{F}:=(\mathcal{F}_t)_{t\in[0,T]}$ be the augmented filtration generated by $W$. In this paper, we study mean-field forward-backward stochastic differential equations with reflection (MF-FBSDERs for short) of the following form:
\begin{equation}\label{9paz1}
\left\{
\begin{alignedat}{2}
X_t&=\xi+\int_0^t B(r,X_r,Y_r,Z_r,\mathcal{L}(X_r,Y_r,Z_r))\,dr+\sigma W_t,
    && t\in[0,T],\\
Y_t&=h(X_T,\mathcal{L}(X_T))
    +\int_t^T f(r,X_r,Y_r,Z_r,\mathcal{L}(X_r,Y_r,Z_r))\,dr\\
&\quad+R_T-R_t-\int_t^T Z_r\,dW_r,
    && t\in[0,T],\\
L_t&\le Y_t\le U_t,
    && t\in[0,T],\\
0&=\int_0^T\big(Y_{r-}-\limsup_{s\uparrow r}L_s\big)\,dR^{*,+}_r\\
&\quad+\int_0^T\big(\liminf_{s\uparrow r}U_s-Y_{r-}\big)\,dR^{*,-}_r,\\
0&=\sum_{0\le r<T}(Y_r-L_r)\max\{R_{r+}-R_r,0\}\\
&\quad+\sum_{0\le r<T}(U_r-Y_r)\max\{R_r-R_{r+},0\}.
\end{alignedat}
\right.
\end{equation}
Here $\xi$ is an $\mathcal{F}_0$-measurable random variable, $\mathcal{L}(X_r,Y_r,Z_r)$ denotes the joint law of $X_r$, $Y_r$, and $Z_r$, and
\[
B,f:[0,T]\times\mathbb{R}\times\mathbb{R}\times\mathbb{R}^d\times\mathcal{P}_1(\mathbb{R}^{d+2})\to\mathbb{R}
\]
are the forward and backward generators. For $k\geq 1$ and $m\geq 1$, $\mathcal{P}_k(\mathbb{R}^m)$ denotes the set of probability measures on $(\mathbb{R}^m,\mathcal{B}(\mathbb{R}^m))$ with finite $k$th moment. Moreover, $h:\mathbb{R}\times\mathcal{P}_1(\mathbb{R})\to\mathbb{R}$ is the terminal function, and $L, U$ are $\mathbb{F}$-optional processes of class (D). We write $R^*$ for the càdlàg part of $R$ and $R^{*,+}, R^{*,-}$ for its Jordan decomposition. Since $Z$ is defined only up to $dt\otimes dP$-null sets, we fix throughout the paper a progressively measurable representative of $Z$. All occurrences of $\mathcal{L}(X_t,Y_t,Z_t)$ and of Wasserstein distances involving $Z_t$ are understood relative to this representative, and identities in which they appear under time integration are unaffected by changes on $dt\otimes dP$-null sets.

The first results on forward-backward stochastic differential equations (FBSDEs for short), that is, on \eqref{9paz1} without barriers and without mean-field terms, were obtained by Antonelli \cite{Antonelli}, a few years after the pioneering work \cite{PP} on backward stochastic differential equations. The theory has since become a central tool in stochastic analysis and in applications, especially in economics and finance; see, for instance, \cite{Antonelli,DE,EPQ}.

An FBSDE consists of two coupled stochastic differential equations: a forward equation describing the evolution of a state process, and a backward equation that is often related to optimization or valuation problems, such as the pricing of contingent claims.

Since \cite{Antonelli}, the theory of FBSDEs has developed in many directions, both through probabilistic methods and through connections with PDEs. We refer to \cite{Ma} for broader background.

Two developments are particularly relevant here. First, El Karoui et al. \cite{EKPPQ} introduced reflected backward stochastic differential equations with one barrier, and Cvitanic and Karatzas \cite{CK} developed the two-barrier case. In these works, the authors considered Brownian filtrations and square-integrable continuous barriers $L,U$ satisfying the Mokobodzki condition. The corresponding RBSDEs have the form
\begin{equation}
\label{9paz2}
\left\{
\begin{alignedat}{2}
Y_t&=\xi+\int_t^T f(r,Y_r,Z_r)\,dr+\int_t^T dR_r-\int_t^T Z_r\,dW_r,
    && t\in[0,T],\\
L_t&\leq Y_t\leq U_t,
    && t\in[0,T],\\
0&=\int_0^T(Y_r-L_r)\,dR^+_r
  =\int_0^T(U_r-Y_r)\,dR^-_r.
\end{alignedat}
\right.
\end{equation}
where the solution $(Y,Z,R)$ has continuous $Y$ and a finite-variation process $R$ starting from zero. These developments also led to the study of FBSDEs with reflection (FBSDERs), in which a forward equation is coupled with a reflected backward equation such as \eqref{9paz2}. A reflected FBSDE (respectively, RBSDE) extends a classical FBSDE (respectively, BSDE) by adding a reflection mechanism that keeps the solution inside prescribed constraints. This mechanism is encoded by the finite-variation reflection process $R$ (or by the reflection measure $dR$), which prevents the relevant component, typically $Y$ in the backward equation, from crossing the prescribed lower or upper barrier; see \cite[Chapter 7]{Ma}.

Second, Carmona and Delarue \cite{CD} introduced mean-field FBSDEs (MF-FBSDEs), which are equations of the form
\begin{equation*}
\left\{
\begin{alignedat}{2}
X_t&=\xi+\int_0^t B(r,X_r,Y_r,Z_r,\mathcal{L}(X_r,Y_r,Z_r))\,dr+\sigma W_t,
    && t\in[0,T],\\
Y_t&=h(X_T,\mathcal{L}(X_T))
    +\int_t^T f(r,X_r,Y_r,Z_r,\mathcal{L}(X_r,Y_r,Z_r))\,dr\\
&\quad-\int_t^T Z_r\,dW_r,
    && t\in[0,T].
\end{alignedat}
\right.
\end{equation*}
This has become a rapidly developing area of stochastic differential equations, with a wide range of applications in optimization, control, and game theory; see \cite{CD2} for a comprehensive account and \cite{Yong} for related $L^p$-theory for FBSDEs.

MF-FBSDEs generalize FBSDEs by incorporating the mean-field effect, namely the dependence of a representative agent on the distribution of the population. Thus the trajectory and value of a single agent depend not only on its own state but also on aggregate statistical features of the population. Such models arise in stochastic game theory and in large interacting particle systems.

The goal of this work is to combine these two directions:
\[
\mathrm{FBSDER}\quad+\quad\mathrm{MF-FBSDE}\quad=\quad\mathrm{MF-FBSDER},
\]
This synthesis is represented by \eqref{9paz1}. We consider a forward-backward problem with a mean-field component and simultaneous lower and upper constraints on the backward component $Y$ through the barriers $L$ and $U$.

One of the closest works to our setting is \cite{LMXZ}, whose motivation is to combine reflected FBSDEs with mean-field coefficients depending on the law of the solution. This builds on mean-field BSDEs and related PDEs \cite{BDL,BLP}, reflected mean-field BSDEs \cite{Li2014,LL2012}, and reflected FBSDEs \cite{HLW,LPL}.

Very recently, Cosso, D'Andolfi, and Dumitrescu \cite{CDD2026} introduced a coupled reflected forward--backward McKean--Vlasov system for optimal-stopping mean-field games with randomized stopping strategies. Their square-integrable formulation involves a single obstacle and an endogenous survival process representing the randomized stopping strategy, and it yields equilibrium, finite-player approximation, and PDE results. In their system, the forward and backward components are coupled through the survival process; conditional on that process, the forward dynamics do not depend directly on $(Y,Z)$. This differs from the fully coupled system considered here, whose coefficients may depend directly on $(X,Y,Z)$ and their joint law.

The present contribution is complementary and should be understood as a synthesis of several strands that are usually treated separately: mean-field dependence, forward-backward coupling, two reflecting barriers, optional-barrier regularity, $L^p$ data with $p\in(1,2]$, and law dependence involving $Z$. Table \ref{tab:comparison} gives a schematic comparison with representative nearby settings. In the table, MF means mean-field dependence, F--B means forward-backward coupling, Two means two reflecting barriers, Optional means optional barriers, and Law in $Z$ records whether the coefficients may depend on the law of the martingale integrand.

\begin{center}
\refstepcounter{table}
\label{tab:comparison}
{\small\textbf{Table \thetable.} Schematic comparison with nearby classes of equations.\par}
\medskip
\scriptsize
\begin{tabular}{lcccccc}
\hline
Setting & MF & F--B & Two & Optional & Data & Law in $Z$\\
\hline
Double RBSDEs \cite{CK} & no & no & yes & no & $L^2$ & no\\
Optional RBSDEs \cite{GIOQ,KRS,KRz1} & no & no & yes & yes & $L^p$ & no\\
Mean-field FBSDEs \cite{CD,CD2,BZ} & yes & yes & no & no & varies & sometimes\\
MF reflected FBSDEs \cite{LMXZ} & yes & yes & no & no & $L^2$ & limited\\
Randomized OS--MFGs \cite{CDD2026} & yes & via $L$ & no & no & $L^2$ & no\\
This paper & yes & yes & yes & yes & $p\in(1,2]$ & yes\\
\hline
\end{tabular}
\end{center}

The optional-barrier feature is particularly important. The theory of RBSDEs with optional barriers has developed only recently, and relatively few papers are available in this area; see, for example, \cite{KRz1,KRS,GIOQ}. Optional-barrier RBSDEs are well suited to problems in finance, insurance, and stochastic control in which payoff processes may jump or fail to be right-continuous.

We prove existence and uniqueness for \eqref{9paz1} by combining results for the forward and reflected backward equations and by analyzing the fixed-point structure that couples them. In the first part of the paper, an important limitation is the length of the time horizon. Specifically, we assume that
\begin{equation}\label{9paz4}
T<C
\end{equation} 
for a sufficiently small constant $C>0$. In our framework, it is difficult to obtain existence for \eqref{9paz1} on arbitrary time intervals using only stochastic analysis. For classical mean-field FBSDEs, longer-time results can sometimes be obtained through PDE methods and by gluing local solutions; see \cite{CD2}. In the present setting, however, the PDE theory corresponding to reflected FBSDEs, and in particular to optional-barrier RBSDEs, is not yet available.

In the square-integrable case $p=2$, our second method bypasses \eqref{9paz4}: under additional assumptions on $f$, and by working with an exponentially weighted norm, we obtain a solution on an arbitrary interval $[0,T]$. A similar method for MF-FBSDEs was used in \cite{BZ} for an infinite time horizon.

Natural continuations of the present work include $N$-player approximations and $\varepsilon$-Nash equilibria for the corresponding finite-player Dynkin games, Markovian double-obstacle variational inequalities of McKean--Vlasov type, particle or deep-BSDE numerical schemes, weaker Mokobodzki assumptions, and common-noise or jump extensions. We do not pursue these questions here.

\subsection{A mean-field Dynkin game interpretation}

A central feature of \eqref{9paz1} is that the reflected component is the backward variable $Y$, rather than the forward state variable $X$. Thus the reflection acts on the continuation value of a representative agent. This is natural in recursive Dynkin games and game-option valuation: the lower obstacle represents the payoff available to one player by exercising, stopping, exiting, or claiming protection, while the upper obstacle represents the payoff imposed by the counterparty through cancellation, calling, intervention, or termination. The finite-variation term $R=R^+-R^-$ is the minimal correction that keeps the continuation value inside the admissible payoff band $L\le Y\le U$.

Doubly reflected BSDEs are naturally connected with Dynkin games; see, for example, \cite{CK,GIOQ,KRz1}. In the present mean-field setting, the value process of a representative agent is constrained by stopping and cancellation payoffs, while the dynamics and payoff rates depend on the population law. This viewpoint is close in spirit to mean-field optimal stopping problems, where rewards depend on the distribution of the population; see \cite{Bertucci,BDT}. To make the interpretation explicit, fix a candidate flow of laws $\mu=(\mu_t)_{t\in[0,T]}$ with $\mu_t\in\mathcal{P}_p(\mathbb{R}^{d+2})$, write $\mu^X_t$ for its first marginal, and let $\mathcal{T}_t$ denote the stopping times taking values in $[t,T]$. For fixed $\mu$, set
\[
g^\mu_s(y,z):=f(s,X^\mu_s,y,z,\mu_s).
\]
For $\tau,\theta\in\mathcal{T}_t$, define
\[
\begin{aligned}
\xi_{\tau,\theta}^\mu
&=L_\tau\mathbf{1}_{\{\tau\le \theta,\ \tau<T\}}
  +U_\theta\mathbf{1}_{\{\theta<\tau\}}\\
&\quad
  +h(X_T^\mu,\mu_T^X)\mathbf{1}_{\{\tau=\theta=T\}}.
\end{aligned}
\]
The fixed-population recursive Dynkin game is then formulated through the $g^\mu$-evaluation
\[
J_t^\mu(\tau,\theta)
:=\mathcal{E}^{g^\mu}_{t,\tau\wedge\theta}
  \left[\xi_{\tau,\theta}^\mu\right],
\]
where $\mathcal{E}^{g^\mu}$ is the nonlinear evaluation defined by the BSDE with generator $g^\mu$. Its value is, schematically,
\[
Y_t^\mu
=\esssup_{\tau\in\mathcal{T}_t}
  \essinf_{\theta\in\mathcal{T}_t}
  J_t^\mu(\tau,\theta).
\]
Here $Y^\mu$ is the continuation value. The process $Z^\mu$ is the martingale integrand produced by the associated BSDE/DRBSDE, not an exogenous component of the stopping payoff; equivalently, it is the predictable process multiplying $dW_t$ in the stochastic integral. In financial terminology, $Z^\mu$ may be interpreted as the hedging or risk-exposure component associated with the value process.
The mean-field fixed point then imposes the equilibrium consistency condition
\[
\mu_t=\mathcal{L}(X^\mu_t,Y^\mu_t,Z^\mu_t),\qquad t\in[0,T].
\]
Thus the reflected BSDE gives the Dynkin-game value for a fixed population law, and the mean-field fixed point makes the representative agent consistent with the population distribution. This should be understood as a verification interpretation rather than a separate game theorem proved in the present paper. A precise mean-field game statement can be obtained under additional assumptions ensuring that, for each admissible flow $\mu$, the fixed-population recursive Dynkin game admits a value represented by the doubly reflected BSDE with generator $g^\mu$, barriers $L,U$, and terminal payoff $h(X_T^\mu,\mu_T^X)$, together with suitable saddle points or $\varepsilon$-saddle points. Under these verification hypotheses, a solution of the mean-field system \eqref{9paz1} satisfying the consistency condition above yields a representative-agent mean-field equilibrium. Proving such a verification theorem, and especially an $N$-player approximation or $\varepsilon$-Nash result, is beyond the scope of this paper. For equilibrium and finite-player approximation results in the distinct setting of one-sided reflected optimal-stopping mean-field games with randomized strategies, see \cite{CDD2026}.

This gives a concrete interpretation of mean-field game options and cancellable contracts, in the tradition of game options and Israeli options \cite{KiferGameOptions}. Here $Y$ is the contract value, $L$ is the holder's exercise, conversion, or surrender payoff, and $U$ is the issuer's cancellation payoff. The law dependence captures aggregate market effects such as liquidity, crowding, systemic volatility, or the distribution of agents' wealth and positions. Optional barriers are useful because exercise and cancellation payoffs may jump at announcement times, default times, coupon dates, or other irregular information times. The same continuation-value interpretation also covers surrenderable insurance and annuity contracts, where $Y$ is a reserve or policy value; mean-field entry--exit or investment games, where $Y$ is the value of remaining active; and systemic-risk or capital-reserve models, where $Y$ represents a solvency-adjusted value, reserve, or continuation utility constrained by regulatory or contractual limits.

\begin{remark}
The theorems below treat $L$ and $U$ as given optional processes satisfying terminal compatibility and a strong Mokobodzki condition. This already covers applications in which the state dynamics, running payoff, and terminal payoff are distribution-dependent while the admissible value band is specified exogenously. A stronger model would allow the barriers themselves to depend on the population, for instance
\[
L^\mu_t=\ell(t,X_t,\mu_t),\qquad U^\mu_t=u(t,X_t,\mu_t),
\]
where $\mu_t=\mathcal{L}(X_t,Y_t,Z_t)$. Such barriers correspond to exercise and cancellation payoffs that depend on the population distribution, and they are especially natural in large markets with liquidity effects, surrenderable insurance with population-dependent lapse behavior, and systemic-risk models with aggregate solvency constraints. Proving this extension would require stability estimates for doubly reflected BSDEs with respect to the barriers and is therefore beyond the present results.
\end{remark}

\subsection{Notation and preliminaries}

For $x\in\mathbb{R}^d$, $|x|$ denotes the Euclidean norm. Let $\mathcal{S}_{\mathbb{F}}(0,T)$ be the set of all $\mathbb{F}$-progressively measurable, $\mathbb{R}$-valued processes on $[0,T]$. We denote by $\mathcal{S}^p_{\mathbb{F}}(0,T)$ the set of all $Y\in\mathcal{S}_{\mathbb{F}}(0,T)$ such that
\[
||Y||_{\mathcal{S}^p_{\mathbb{F}}(0,T)}:=\Big(\mathbb{E}\sup_{0\le t\le T}|Y_t|^p\Big)^{\frac{1}{p}}<\infty.
\]
Let $q\ge 1$. We denote by $L^{p,q}_{\mathbb{F}}(\alpha,\beta)$ the set of all $\mathbb{F}$-progressively measurable, $\mathbb{R}$-valued processes $X=(X_t)_{t\in[0,T]}$ such that
\[
||X||_{L^{p,q}_{\mathbb{F}}(\alpha,\beta)}:=\Bigg(\mathbb{E}\Big(\int^{\beta}_{\alpha}|X_r|^p\,dr\Big)^{\frac{q}{p}}\Bigg)^{\frac{1}{q}}<\infty.
\]
$L^p_{\mathbb{F}}(\alpha,\beta)$ denotes $L^{p,p}_{\mathbb{F}}(\alpha,\beta)$.

Let $\mathcal{G}\subset\mathcal{F}$. We write $L^p(\mathcal{G})$ for the set of all $\mathcal{G}$-measurable random variables $X$ such that
\[
||X||_{L^p}:=\Big(\mathbb{E}|X|^p\Big)^{\frac{1}{p}}<\infty.
\]
We denote by $\mathcal{H}_{\mathbb{F}}(\alpha,\beta)$ the space of all $\mathbb{F}$-progressively measurable, $\mathbb{R}^d$-valued processes $Z=(Z_t)_{t\in[0,T]}$ such that
\[
\int^{\beta}_{\alpha}|Z_r|^2\,dr<\infty\quad P\mbox{-a.s.}
\]
$\mathcal{H}^s_{\mathbb{F}}(\alpha,\beta)$, $s>0$, is the subspace of $Z\in\mathcal{H}_{\mathbb{F}}(\alpha,\beta)$ satisfying
\[
\mathbb{E}\Big(\int^{\beta}_{\alpha}|Z_r|^2\,dr\Big)^{\frac{s}{2}}<\infty.
\] 
We denote by $\mathcal{V}_{\mathbb{F}}(\alpha,\beta)$ (resp. $\mathcal{V}^+_{\mathbb{F}}(\alpha,\beta)$) the space of $\mathbb{F}$-progressively measurable, $\mathbb{R}$-valued processes $V=(V_t)_{t\in[0,T]}$ with finite variation (resp. nondecreasing paths) on $[[\alpha,\beta]]$. The spaces $\mathcal{V}_{0,\mathbb{F}}(\alpha,\beta)$ and $\mathcal{V}^+_{0,\mathbb{F}}(\alpha,\beta)$ consist of those processes in $\mathcal{V}_{\mathbb{F}}(\alpha,\beta)$ and $\mathcal{V}^+_{\mathbb{F}}(\alpha,\beta)$, respectively, satisfying $V_{\alpha}=0$. Finally, $\mathcal{V}^p_{\mathbb{F}}(\alpha,\beta)$ and $\mathcal{V}^{+,p}_{\mathbb{F}}(\alpha,\beta)$ denote the corresponding subspaces for which $E|V|^p_{\alpha,\beta}<\infty$, where $|V|_{\alpha,\beta}$ is the total variation of $V$ on $[[\alpha,\beta]]$.

Let $V\in\mathcal{V}_{\mathbb{F}}(0,T)$. By $V^*$ we denote the c\`adl\`ag part of the process $V$, i.e.
\[
V^*_t=V_t-\sum_{0\le r<t}\Delta^+V_r.
\]

Throughout the paper, relations between random variables are understood to hold $P$-a.s. For processes $X^1=(X^1_t)_{t\in[0,T]}$ and $X^2=(X^2_t)_{t\in[0,T]}$, we write $X^1\le X^2$ if $X^1_t\le X^2_t$ for all $t\in[0,T]$, $P$-a.s.

For an $\mathbb{F}$-optional process $X=(X_t)_{t\in[0,T]}$, we set $\overrightarrow{X}_s=\limsup_{r\uparrow s}X_r$ and $\underrightarrow{X}_s=\liminf_{r\uparrow s}X_r$ for $s\in(0,T]$. We also set $\overleftarrow{X}_s=\limsup_{r\downarrow s}X_r$ and $\underleftarrow{X}_s=\liminf_{r\downarrow s}X_r$ for $s\in[0,T)$.

We consider the set $\mathcal{P}_p(\mathbb{R}^{d+2})$, $p\ge 1$, endowed with the $p$-Wasserstein metric: for $\eta,\eta'\in\mathcal{P}_p(\mathbb{R}^{d+2})$,
\[
\begin{aligned}
\mathcal{W}_p(\eta,\eta')
:=\inf\Big\{&
\Big(\int_{\mathbb{R}^{d+2}\times\mathbb{R}^{d+2}}|x-y|^p\gamma(dx,dy)\Big)^{\frac{1}{p}}:\,
\gamma\in\mathcal{P}_p(\mathbb{R}^{2d+4}),\\
&\gamma(\cdot\times\mathbb{R}^{d+2})=\eta,\quad
\gamma(\mathbb{R}^{d+2}\times\cdot)=\eta'\Big\}.
\end{aligned}
\]
We use the same convention for $\mathcal{P}_p(\mathbb{R})$.

\section{Mean-field Stochastic Differential Equations}

Let $p\in(1,2]$, $Y\in\mathcal{S}^p_{\mathbb{F}}(0,T)$, and $Z\in\mathcal{H}^p_{\mathbb{F}}(0,T)$.

\begin{definition}
We say that a stochastic process $X$ is a solution to the mean-field stochastic differential equation on $[0,T]$ with initial condition $\xi$, generator $B$, diffusion coefficient $\sigma$, and given processes $Y,Z$ (\textnormal{MF-SDE}$^T(\xi,B,\sigma,Y,Z)$ for short) if
\begin{enumerate}
\item[(a)] $\int^T_0|B(r,X_r,Y_r,Z_r,\mathcal{L}(X_r,Y_r,Z_r))|\,dr<\infty$,
\item[(b)] $X_t=\xi+\int^t_0 B(r,X_r,Y_r,Z_r,\mathcal{L}(X_r,Y_r,Z_r))\,dr+\sigma W_t$, $t\in[0,T]$.
\end{enumerate}
\end{definition}

\noindent We impose the following assumptions.

\begin{enumerate}
\item[(A1)] There exist $L_x,L_y,L_z,\gamma\ge0$ such that
\[
|B(t,x,y,z,\eta)-B(t,x',y',z',\eta')|\le L_x|x-x'|+L_y|y-y'|+L_z|z-z'|+\gamma\mathcal{W}_{p}(\eta,\eta')
\]
for $t\in[0,T]$, $x,x',y,y'\in\mathbb{R}$, $z,z'\in\mathbb{R}^d$, $\eta,\eta'\in\mathcal{P}_{p}(\mathbb{R}^{d+2})$,
\item[(A2)] $\xi\in L^p(\mathcal{F}_0)$, $B(\cdot,0,0,0,\mathcal{L}(0,0,0))\in L^{1,p}_{\mathbb F}(0,T)$.
\item[(A3)] $\int^T_0 |B(r,x,0,0,\mathcal{L}(0,0,0))|\,dr<\infty$ for every $x\in\mathbb{R}$.
\end{enumerate}

Let $X^i$ be a solution to \textnormal{MF-SDE}$^T(\xi^i,B^i,\sigma,Y^i,Z^i)$ such that $X^i\in\mathcal{S}^p_{\mathbb{F}}(0,T)$, and set $\mathcal{L}^i_t:=\mathcal{L}(X^i_t,Y^i_t,Z^i_t)$ for $t\in[0,T]$, $i=1,2$.

\begin{proposition}\label{22lut2}
Assume that $B^1$ satisfies \textnormal{(A1)} and $\int^T_0|B^1-B^2|(r,X^2_r,Y^2_r,Z^2_r,\mathcal{L}^2_r)\,dr<\infty$. Then there exists a constant $c_p$ such that
\begin{equation*}
\begin{split}
\sup_{0\le t\le T}|X^1_t-X^2_t|^p
&\le c_p\Bigg(
|\xi^1-\xi^2|^p
+T^p\sup_{0\le t\le T}|Y^1_t-Y^2_t|^p\\
&\quad
+T^{\frac{1}{2}}\Big(\int_0^T|Z^1_r-Z^2_r|^2\,dr\Big)^{\frac{p}{2}}
+\Big(\int_0^T\mathcal{W}_{p}(\mathcal{L}^1_r,\mathcal{L}^2_r)\,dr\Big)^p\\
&\quad
+\Big(\int_0^T|B^1-B^2|(r,X^2_r,Y^2_r,Z^2_r,\mathcal{L}^2_r)\,dr\Big)^p
\Bigg)
\end{split}
\end{equation*}
\end{proposition}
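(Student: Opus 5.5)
The plan is a purely pathwise argument: fix $\omega$ outside a null set and estimate the difference of the two forward equations directly. No stochastic-integral estimates are needed, because the diffusion coefficient $\sigma$ is common to both equations and the terms $\sigma W_t$ cancel. Subtracting the two equations in (b), and adding and subtracting $B^1(r,X^2_r,Y^2_r,Z^2_r,\mathcal{L}^2_r)$, gives for every $t\in[0,T]$
\[
X^1_t-X^2_t=\xi^1-\xi^2+\int_0^t\big(B^1(r,X^1_r,Y^1_r,Z^1_r,\mathcal{L}^1_r)-B^1(r,X^2_r,Y^2_r,Z^2_r,\mathcal{L}^2_r)\big)\,dr+\int_0^t (B^1-B^2)(r,X^2_r,Y^2_r,Z^2_r,\mathcal{L}^2_r)\,dr .
\]
All integrals are finite: condition (a) holds for both solutions, and the integrability of $|B^1-B^2|$ along $(X^2,Y^2,Z^2,\mathcal{L}^2)$ is part of the hypotheses. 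Hence the splitting is legitimate.

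Next I apply (A1) to the first integrand. With $u(t):=\sup_{0\le s\le t}|X^1_s-X^2_s|$, this gives
\[
u(t)\le A+L_x\int_0^t u(r)\,dr,
\]
where
\[
A:=|\xi^1-\xi^2|+L_y\int_0^T|Y^1_r-Y^2_r|\,dr+L_z\int_0^T|Z^1_r-Z^2_r|\,dr+\gamma\int_0^T\mathcal{W}_p(\mathcal{L}^1_r,\mathcal{L}^2_r)\,dr+\int_0^T|B^1-B^2|(r,X^2_r,Y^2_r,Z^2_r,\mathcal{L}^2_r)\,dr.
\]
The function $u$ is finite, since $X^i\in\mathcal{S}^p_{\mathbb{F}}(0,T)$ implies a.s. bounded paths, and it is nondecreasing. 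The deterministic Gronwall lemma, applied pathwise, therefore yields $u(T)\le A\,e^{L_xT}$.

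It remains to bound the terms of $A$ by the quantities in the statement. First, $\int_0^T|Y^1_r-Y^2_r|\,dr\le T\sup_t|Y^1_t-Y^2_t|$, which produces the $T^p$ factor after raising to the power $p$. Second, by Cauchy--Schwarz, $\int_0^T|Z^1_r-Z^2_r|\,dr\le T^{1/2}\big(\int_0^T|Z^1_r-Z^2_r|^2dr\big)^{1/2}$, which after raising to the power $p$ gives $T^{p/2}$. Since $p\ge1$, we have $T^{p/2}=T^{1/2}\,T^{(p-1)/2}\le \max\{1,T^{1/2}\}\,T^{1/2}$, so this is of the stated form $T^{1/2}$ times the $Z$-term, up to a constant. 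Finally, raising $u(T)\le Ae^{L_xT}$ to the power $p$ and using $(a_1+\dots+a_5)^p\le 5^{p-1}\sum a_i^p$ gives the claim. The constant is $c_p=5^{p-1}e^{pL_xT}\max\{1,L_y,L_z,\gamma\}^p\max\{1,T^{1/2}\}$, which depends only on $p$, the Lipschitz constants of $B^1$, and $T$; it stays bounded on bounded horizons, which is what the later short-time contraction argument requires.

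I expect the only delicate point to be bookkeeping, not analysis. Specifically, one has to check that every integral is a.s. finite, so that the pathwise Gronwall argument is valid, and that the $\mathcal{W}_p$ term is measured with respect to the fixed progressive representative of $Z$ used to define $\mathcal{L}^i_r$. Changing $Z$ on a $dt\otimes dP$-null set alters $\mathcal{L}^i_r$ only for a Lebesgue-null set of $r$, so the $dr$-integrals in the estimate are unaffected.
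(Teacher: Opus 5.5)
Your proof is correct. It follows the paper's overall pattern: a pathwise estimate, adding and subtracting $B^1(r,X^2_r,Y^2_r,Z^2_r,\mathcal{L}^2_r)$, applying (A1), then Gronwall and H\"older. It differs in one step, and the change is a simplification.

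The paper works with the square. It applies the change-of-variables formula to $|X^1_t-X^2_t|^2$, bounds $(X^1_r-X^2_r)\big(B^1(\cdot)-B^2(\cdot)\big)$ via (A1), and applies Gronwall. After raising to the power $p/2$, it is left with mixed terms such as $\big(\int_0^T|X^1_r-X^2_r||Y^1_r-Y^2_r|\,dr\big)^{p/2}$. H\"older and Young split these into $\varepsilon\sup_t|X^1_t-X^2_t|^p$ plus a multiple of $\big(\int_0^T|Y^1_r-Y^2_r|\,dr\big)^p$. The $\varepsilon$-term is then absorbed into the left-hand side, which requires $\sup_t|X^1_t-X^2_t|<\infty$ a.s.

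Since $X^1-X^2$ has no martingale part, squaring is unnecessary here. Your linear Gronwall inequality for $u(t)=\sup_{s\le t}|X^1_s-X^2_s|$ gives the estimate in one step, with no absorption and an explicit constant.

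Your bound $T^{p/2}\le\max\{1,T^{1/2}\}\,T^{1/2}$ also makes explicit the $T$-dependence that the paper leaves implicit in $c_p$. This is harmless, since the short-time theorem assumes $T\le 1$.

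The paper's quadratic formulation mainly keeps this step parallel to its other energy-type estimates: the reflected BSDE stability estimate, and the weighted computation with $e^{-Kt}|X^1_t-X^2_t|^2$ in the global $p=2$ theorem. For this proposition it offers no advantage over your route.
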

\begin{proof}
First, $X^1-X^2$ is a finite-variation process. Hence, by the change-of-variables formula, for $t\in[0,T]$,
\begin{equation}\label{22lut1}
\begin{split}
|X^1_t-X^2_t|^2
&=|\xi^1-\xi^2|^2\\
&\quad+2\int_0^t(X^1_r-X^2_r)
\big(B^1(r,X^1_r,Y^1_r,Z^1_r,\mathcal{L}^1_r)
-B^2(r,X^2_r,Y^2_r,Z^2_r,\mathcal{L}^2_r)\big)\,dr.
\end{split}
\end{equation}
By \textnormal{(A1)},
\begin{equation*}
\begin{split}
&(X^1_r-X^2_r)
\big(B^1(r,X^1_r,Y^1_r,Z^1_r,\mathcal{L}^1_r)
-B^2(r,X^2_r,Y^2_r,Z^2_r,\mathcal{L}^2_r)\big)\\
&\quad\le L_x|X^1_r-X^2_r|^2
+L_y|X^1_r-X^2_r||Y^1_r-Y^2_r|
+L_z|X^1_r-X^2_r||Z^1_r-Z^2_r|\\
&\qquad
+\gamma|X^1_r-X^2_r|\mathcal{W}_{p}(\mathcal{L}^1_r,\mathcal{L}^2_r)
+|X^1_r-X^2_r||B^1-B^2|(r,X^2_r,Y^2_r,Z^2_r,\mathcal{L}^2_r)
\end{split}
\end{equation*}
Combining this estimate with \eqref{22lut1} and applying Gronwall's inequality, we obtain, for $t\in[0,T]$,
\begin{equation*}
\begin{split}
|X^1_t-X^2_t|^2
&\le\exp(L_xT)\Bigg(
L_y\int_0^T|X^1_r-X^2_r||Y^1_r-Y^2_r|\,dr\\
&\quad
+L_z\int_0^T|X^1_r-X^2_r||Z^1_r-Z^2_r|\,dr
+\gamma\int_0^T|X^1_r-X^2_r|\mathcal{W}_{p}(\mathcal{L}^1_r,\mathcal{L}^2_r)\,dr\\
&\quad
+\int_0^T|X^1_r-X^2_r||B^1-B^2|(r,X^2_r,Y^2_r,Z^2_r,\mathcal{L}^2_r)\,dr
\Bigg).
\end{split}
\end{equation*}
Hence,
\begin{equation*}
\begin{split}
&\sup_{0\le t\le T}|X^1_t-X^2_t|^p\le c_p\Big(\Big(\int^T_0|X^1_r-X^2_r||Y^1_r-Y^2_r|\,dr\Big)^p+\Big(\int^T_0|X^1_r-X^2_r||Z^1_r-Z^2_r|\,dr\Big)^p\\
&\quad+\Big(\int^T_0|X^1_r-X^2_r|\mathcal{W}_{p}(\mathcal{L}^1_r,\mathcal{L}^2_r)\,dr\Big)^p+\Big(\int^T_0|X^1_r-X^2_r||B^1-B^2|(r,X^2_r,Y^2_r,Z^2_r,\mathcal{L}^2_r)\,dr\Big)^p\Big).
\end{split}
\end{equation*}
The result follows from Young's and H\"older's inequalities.
\end{proof}

\begin{remark}\label{21lut2}
A straightforward calculation shows that, for every $p\in(1,2]$,
\begin{equation}\label{21lut1}
\begin{split}
\Big(\int_0^T\mathcal{W}_{p}(\mathcal{L}^1_r,\mathcal{L}^2_r)\,dr\Big)^p
&\le T_{p}\Bigg(
\mathbb{E}\sup_{0\le t\le T}|X^1_t-X^2_t|^p
+\mathbb{E}\sup_{0\le t\le T}|Y^1_t-Y^2_t|^p\\
&\quad
+\mathbb{E}\Big(\int_0^T|Z^1_r-Z^2_r|^2\,dr\Big)^{\frac{p}{2}}
\Bigg).
\end{split}
\end{equation}
Here $T_p=a_p(T^p+T^{p/2})$ for a constant $a_p$ depending only on $p$. In particular, when $T\le 1$ the factor $T_p$ is bounded by a constant multiple of $T^{1/2}$; this harmless constant is absorbed into the constants appearing below.

Indeed, by coupling the laws with the original random variables and using Minkowski's inequality,
\[
\mathcal{W}_p(\mathcal{L}^1_t,\mathcal{L}^2_t)
\le
\|X^1_t-X^2_t\|_{L^p}
+\|Y^1_t-Y^2_t\|_{L^p}
+\|Z^1_t-Z^2_t\|_{L^p}.
\]
The first two terms are estimated by
\[
\Big(\int_0^T\|X^1_t-X^2_t\|_{L^p}\,dt\Big)^p
\le T^p\,\mathbb{E}\sup_{0\le t\le T}|X^1_t-X^2_t|^p
\]
and similarly for $Y^1-Y^2$. It remains to control the $Z$ term. H\"older's inequality in time and the mixed-norm inequality give
\[
\Big(\int_0^T\|Z^1_t-Z^2_t\|_{L^p}\,dt\Big)^p
\le
T^{p/2}\,
\mathbb{E}\Big(\int_0^T|Z^1_t-Z^2_t|^2\,dt\Big)^{\frac{p}{2}}.
\]
Combining these bounds proves \eqref{21lut1}.
For $p>2$, the last mixed-norm inequality is reversed, and \eqref{21lut1} generally fails for processes in $\mathcal{H}^p_{\mathbb{F}}(0,T)$. Thus the restriction $p\in(1,2]$ is essential under the present assumptions.
\end{remark}

\begin{theorem}\label{22lut4}
Assume that $p\in(1,2]$ and \textnormal{(A1)-(A3)} are satisfied. Then there exists a unique solution $X$ to \textnormal{MF-SDE}$^T(\xi,B,\sigma,Y,Z)$ such that $X\in\mathcal{S}^p_{\mathbb{F}}(0,T)$.
\end{theorem}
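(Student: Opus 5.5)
Since $Y$ and $Z$ are fixed, the equation in Theorem \ref{22lut4} is a McKean--Vlasov ODE with random coefficients driven by the additive noise $\sigma W$. I would solve it by a Picard/contraction argument on $\mathcal{S}^p_{\mathbb{F}}(0,T)$.

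For $\tilde X\in\mathcal{S}^p_{\mathbb{F}}(0,T)$ put $\tilde{\mathcal{L}}_r:=\mathcal{L}(\tilde X_r,Y_r,Z_r)$ and
\[
\Phi(\tilde X)_t:=\xi+\int_0^t B(r,\tilde X_r,Y_r,Z_r,\tilde{\mathcal{L}}_r)\,dr+\sigma W_t .
\]
The first step is to check that $\Phi$ maps $\mathcal{S}^p_{\mathbb{F}}(0,T)$ into itself. By (A1),
\[
|B(r,\tilde X_r,Y_r,Z_r,\tilde{\mathcal{L}}_r)|\le |B(r,0,0,0,\mathcal{L}(0,0,0))|+L_x|\tilde X_r|+L_y|Y_r|+L_z|Z_r|+\gamma\mathcal{W}_p(\tilde{\mathcal{L}}_r,\delta_0).
\]
Here $\mathcal{W}_p(\tilde{\mathcal{L}}_r,\delta_0)\le \|\tilde X_r\|_{L^p}+\|Y_r\|_{L^p}+\|Z_r\|_{L^p}$. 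Assumption (A2), Hölder's inequality in time ($\int_0^T|Z_r|\,dr\le T^{1/2}(\int_0^T|Z_r|^2dr)^{1/2}$), and the estimate of Remark \ref{21lut2} for the law term then show that the drift integral lies in $L^p$ uniformly in $t$. Together with $\xi\in L^p$ and the Burkholder--Davis--Gundy inequality for $\sigma W$, this gives $\Phi(\tilde X)\in\mathcal{S}^p_{\mathbb{F}}(0,T)$. Progressive measurability is clear, and condition (a) of the definition holds a.s.

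Next comes the contraction estimate. Since $Y,Z$ are shared, for $\tilde X^1,\tilde X^2$ we get pathwise
\[
\sup_{s\le t}|\Phi(\tilde X^1)_s-\Phi(\tilde X^2)_s|\le \int_0^t\big(L_x|\tilde X^1_r-\tilde X^2_r|+\gamma\|\tilde X^1_r-\tilde X^2_r\|_{L^p}\big)dr,
\]
because $\mathcal{W}_p(\tilde{\mathcal{L}}^1_r,\tilde{\mathcal{L}}^2_r)\le\|\tilde X^1_r-\tilde X^2_r\|_{L^p}$ by the canonical coupling. Taking $p$-th powers and expectations yields
\[
\mathbb{E}\sup_{s\le t}|\Delta\Phi_s|^p\le C\,t^{p-1}\int_0^t\mathbb{E}\sup_{u\le r}|\Delta\tilde X_u|^p\,dr .
\]
Iterating this bound gives $\|\Phi^n(\tilde X^1)-\Phi^n(\tilde X^2)\|^p\le (CT^p)^n/n!\,\|\tilde X^1-\tilde X^2\|^p$. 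Equivalently, one can use the weighted norm $\sup_t e^{-\beta t}\mathbb{E}\sup_{s\le t}|\cdot|^p$ with $\beta$ large. Either way some power of $\Phi$ is a contraction, so $\Phi$ has a unique fixed point in $\mathcal{S}^p_{\mathbb{F}}(0,T)$, with no smallness condition on $T$. This gives existence.

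For uniqueness, any two solutions in $\mathcal{S}^p_{\mathbb{F}}(0,T)$ are fixed points of $\Phi$, so they coincide. Alternatively, I would apply Proposition \ref{22lut2} with $B^1=B^2$, $Y^1=Y^2$, $Z^1=Z^2$; its law term is $\int_0^t\|X^1_r-X^2_r\|_{L^p}dr$, and Gronwall's inequality then forces $X^1=X^2$.

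The main obstacle is the first step: the a priori integrability of the drift. The law term involves $\|Z_r\|_{L^p}$, which is finite only after time integration, and this is exactly where $p\le2$ (Remark \ref{21lut2}) is needed. Assumption (A3) appears designed for a pathwise construction: freezing the law flow $\mu$, solving the random ODE $\omega$-by-$\omega$ via Lipschitz continuity in $x$ and (A3), and then running a fixed point in $\mu$ in $C([0,T];\mathcal{P}_p)$. I would keep this as a fallback if the direct estimate in $\mathcal{S}^p$ runs into measurability or integrability issues.
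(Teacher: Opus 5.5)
Your proof is correct, but it is organized differently from the paper's.

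The paper freezes only the law. For $\tilde x\in\mathcal{S}^p_{\mathbb{F}}(0,T)$ it defines $\Psi(\tilde x)$ as the solution of the classical SDE with drift $B(t,x,Y_t,Z_t,\mathcal{L}(\tilde x_t,Y_t,Z_t))$, and takes its existence in $\mathcal{S}^p_{\mathbb{F}}(0,T)$ from an external theorem on SDEs with random coefficients. It then uses Proposition~\ref{22lut2} to get the Lipschitz constant $c_pT^p$ for $\Psi$. The case $c_pT^p\ge 1$ is handled by splitting $[0,T]$ into short subintervals.

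You freeze the state as well, so your $\Phi$ is an explicit integral map. You check the self-mapping property by hand and get a global fixed point at once from the bound $(CT^p)^n/n!$ (or an exponentially weighted norm), with no gluing of subintervals.

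What your route buys:
\begin{enumerate}
\item It is self-contained.
\item It makes explicit an integrability fact that the paper's appeal to the external theorem leaves implicit: $\int_0^T\mathcal{W}_p(\mathcal{L}(\tilde X_r,Y_r,Z_r),\delta_0)\,dr<\infty$. This rests on $\int_0^T\|Z_r\|_{L^p}\,dr<\infty$ and is exactly where $p\le 2$ is needed.
\item It never uses (A3). This costs nothing, since (A3) already follows from (A1)--(A2) via $|B(r,x,0,0,\delta_0)|\le |B(r,0,0,0,\delta_0)|+L_x|x|$.
\end{enumerate}

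What the paper's route buys is modularity. It has the same architecture as the proof of Theorem~\ref{22lut3}: solve the non-mean-field equation by a known theorem, then contract in the law. That template is what one needs when the inner equation is not amenable to an elementary Picard scheme. Moreover, the estimate in Proposition~\ref{22lut2} only uses the one-sided bound $(x-x')\big(B(\cdot,x,\ldots)-B(\cdot,x',\ldots)\big)\le L_x|x-x'|^2$, so it adapts to drifts that are merely monotone in $x$. Your pathwise Picard bound, by contrast, needs the full two-sided Lipschitz constant $L_x$ from (A1).
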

\begin{proof}
Define $\Psi:\mathcal{S}^p_{\mathbb{F}}(0,T)\longrightarrow\mathcal{S}^p_{\mathbb{F}}(0,T)$ by $\Psi(\tilde{x})=X$, where $X$ is the unique solution to MF-SDE$^T(\xi,B,\sigma,Y,Z)$ with $B(t,x,y,z)=B(t,x,y,z,\mathcal{L}(\tilde{x},y,z))$, such that $X\in\mathcal{S}^p_{\mathbb{F}}(0,T)$. Existence of this solution follows from \cite[Theorem 3.17]{PR}. Let $X^1,X^2\in\mathcal{S}^p_{\mathbb{F}}(0,T)$ and $\tilde{x}^1,\tilde{x}^2\in\mathcal{S}^p_{\mathbb{F}}(0,T)$ be such that $X^1=\Psi(\tilde{x}^1)$ and $X^2=\Psi(\tilde{x}^2)$. By Proposition \ref{22lut2}, there exists a constant $c_p$ such that
\[
||X^1-X^2||^p_{\mathcal{S}^p}\le c_p\mathbb{E}\Big(\int^T_0\mathcal{W}_{p}(\mathcal{L}(\tilde{x}^1_r,Y_r,Z_r),\mathcal{L}(\tilde{x}^2_r,Y_r,Z_r))\,dr\Big)^p.
\]
Since the two laws differ only in their first component,
\[
\mathcal{W}_{p}(\mathcal{L}(\tilde{x}^1_r,Y_r,Z_r),\mathcal{L}(\tilde{x}^2_r,Y_r,Z_r))
\le \|\tilde{x}^1_r-\tilde{x}^2_r\|_{L^p}.
\]
Consequently, $||X^1-X^2||^p_{\mathcal{S}^p_{\mathbb{F}}(0,T)}\le c||\tilde{x}^1-\tilde{x}^2||^p_{\mathcal{S}^p_{\mathbb{F}}(0,T)}$, where $c=c_pT^p$.

If $c<1$, then $\Psi$ is a contraction. Banach's fixed-point theorem therefore yields $X\in\mathcal{S}^p_{\mathbb{F}}(0,T)$ such that $\Psi(X)=X$, and this process is the unique solution to MF-SDE$^T(\xi,B,\sigma,Y,Z)$. In the general case $c\ge 1$, we divide $[0,T]$ into finitely many sufficiently small intervals. This completes the proof.

\end{proof}

\section{Mean-field RBSDEs with two optional barriers}

Let $X\in\mathcal{S}_{\mathbb{F}}^p(0,T)$. We assume that $L$ and $U$ are $\mathbb{F}$-optional processes of class (D) such that $L_t\le U_t$, $t\in[0,T]$ and $L_T\le h(X_T,\mathcal{L}(X_T))\le U_T$. 

\begin{definition}
We say that a triple $(Y,Z,R)$ of $\mathbb{F}$-adapted processes is a solution to the mean-field reflected backward stochastic differential equation on $[0,T]$ with generator $f$, coefficient $X$, terminal function $h$, lower barrier $L$, and upper barrier $U$ (MF-RBSDE$^T(h,f,X,L,U)$ for short) if
\begin{enumerate}
\item[(a)] $Y$ is a regulated process and $Z\in\mathcal{H}_{\mathbb{F}}(0,T)$,
\item[(b)] $R\in\mathcal{V}_{0,\mathbb{F}}(0,T)$, $L_t\le Y_t\le U_t$, $t\in[0,T]$, and
\begin{equation*}
\begin{split}
&\int^T_0(Y_{r-}-\overrightarrow{L}_r)\,dR^{+,*}_r+\sum_{0\le r<T}(Y_r-L_r)\Delta^+R^+_r\\&
=\int^T_0(\underrightarrow{U}_r-Y_{r-})\,dR^{-,*}_r+\sum_{0\le r<T}(U_r-Y_r)\Delta^+R^-_r=0,
\end{split}
\end{equation*}
where $R=R^+-R^-$ is the Jordan decomposition of $R$,
\item[(c)] $\int^T_0|f(r,X_r,Y_r,Z_r,\mathcal{L}(X_r,Y_r,Z_r))|\,dr<\infty$,
\item[(d)]
\begin{equation*}
\begin{split}
Y_t&=h(X_T,\mathcal{L}(X_T))+\int^{T}_t f(r,X_r,Y_r,Z_r,\mathcal{L}(X_r,Y_r,Z_r))\,dr+R_T-R_t\\
&\quad-\int^{T}_t Z_r\,dW_r,\,t\in[0,T].
\end{split}
\end{equation*}
\end{enumerate}
\end{definition}
We refer to condition (b) as the \textit{minimality condition}.

We use the following hypotheses:
\begin{enumerate}
\item[(B1)] There exist $\lambda_x,\lambda_z,\rho\ge0$ such that
\[
|f(t,x,y,z,\eta)-f(t,x',y,z',\eta')|\le\lambda_x|x-x'|+\lambda_z|z-z'|+\rho\mathcal{W}_{p}(\eta,\eta')
\]
for $t\in[0,T]$, $x,x',y\in\mathbb{R}$, $z,z'\in\mathbb{R}^d$, $\eta,\eta'\in\mathcal{P}_{p}(\mathbb{R}^{2+d})$,
\item[(B2)] There exists $\mu\in\mathbb{R}$ such that
$(y-y')(f(t,x,y,z,\eta)-f(t,x,y',z,\eta))\leq\mu(y-y')^2$ for $t\in[0,T]$, $x,y,y'\in\mathbb{R}$, $z\in\mathbb{R}^d$, and $\eta\in\mathcal{P}_{p}(\mathbb{R}^{d+2})$,
\item[(B3)] For every $(t,x,z,\eta)\in[0,T]\times\mathbb{R}\times\mathbb{R}^d\times\mathcal{P}_{p}(\mathbb{R}^{d+2})$, the mapping $\mathbb{R}\ni y\rightarrow f(t,x,y,z,\eta)$ is continuous,
\item[(B4)] $\int^T_0 |f(r,0,y,0,\mathcal{L}(0,0,0))|\,dr<\infty$ for every $y\in\mathbb{R}$,
\item[(B5)] $h(0,\mathcal{L}(0))\in L^p(\mathcal{F}_T)$, $f(\cdot,0,0,0,\mathcal{L}(0,0,0))\in L^{1,p}_{\mathbb F}(0,T)$,
\item[(B6)] There exists a process $S\in\mathcal{M}_{loc}(0,T)+\mathcal{V}^p_{\mathbb F}(0,T)$ such that
 $L\le S\le U$, $S\in \mathcal S^p_{\mathbb{F}}(0,T)$ and $f(\cdot,0,S,0,\mathcal{L}(0,0,0))\in L^{1,p}_{\mathbb F}(0,T)$.
\end{enumerate}

Let $p\in(1,2]$. Let $(Y^i,Z^i,R^i)$ be a solution to \textnormal{MF-RBSDE}$^T(h^i,f^i,X^i,L,U)$ such that $Y^i\in\mathcal{S}^p_{\mathbb{F}}(0,T)$ and $Z^i\in\mathcal{H}^p_{\mathbb{F}}(0,T)$. Set $\mathcal{L}^i_t:=\mathcal{L}(X^i_t,Y^i_t,Z^i_t)$, $t\in[0,T]$, and $h^i_T:=h^i(X^i_T,\mathcal{L}(X^i_T))$, $i=1,2$. We have the following proposition.

\begin{proposition}\label{20lut19}
Assume that $f^1$ satisfies \textnormal{(B1),(B2)} and that
\[
|f^1-f^2|(\cdot,X^2,Y^2,Z^2,\mathcal{L}^2)\in L^{1,p}_{\mathbb{F}}(0,T).
\]
Then there exists a constant $C_p$ such that
\begin{equation*}
\begin{split}
&\mathbb{E}\sup_{0\le t\le T}|Y^1_t-Y^2_t|^p+\mathbb{E}\Big(\int^T_0|Z^1_r-Z^2_r|^2\,dr\Big)^{\frac{p}{2}}\le C_p\mathbb{E}\Big(|h^1_T-h^2_T|^p+\Big(\int^T_0|X^1_r-X^2_r|\,dr\Big)^p\\
&\quad+\Big(\int^T_0\mathcal{W}_{p}(\mathcal{L}^1_r,\mathcal{L}^2_r)\,dr\Big)^p+\Big(\int^T_0|f^1-f^2|(r,X^2_r,Y^2_r,Z^2_r,\mathcal{L}^2_r)\,dr\Big)^p\Big)
\end{split}
\end{equation*}
\end{proposition}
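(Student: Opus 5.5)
The plan is to reduce the statement to the standard $L^p$ stability estimate for doubly reflected BSDEs with optional barriers and a fixed (non-mean-field) generator, e.g.\ \cite{KRz1,KRS}. Set
\[
\bar f^i(r,y,z):=f^i(r,X^i_r,y,z,\mathcal{L}^i_r),\qquad i=1,2.
\]
Freeze the laws $\mathcal{L}^i$ and the forward processes $X^i$. Then $(Y^i,Z^i,R^i)$ solves an ordinary doubly reflected BSDE with generator $\bar f^i$, terminal value $h^i_T$, and the common barriers $L,U$. By (B1)--(B2), $\bar f^1$ is monotone in $y$ with constant $\mu$ and Lipschitz in $z$ with constant $\lambda_z$. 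Hence the a priori estimate for the difference of two solutions applies, and it bounds the left-hand side by
\[
C_p\,\mathbb{E}\Big(|h^1_T-h^2_T|^p+\Big(\int_0^T|\bar f^1-\bar f^2|(r,Y^2_r,Z^2_r)\,dr\Big)^p\Big).
\]
Using (B1) once more,
\[
|\bar f^1-\bar f^2|(r,Y^2_r,Z^2_r)\le \lambda_x|X^1_r-X^2_r|+\rho\,\mathcal{W}_p(\mathcal{L}^1_r,\mathcal{L}^2_r)+|f^1-f^2|(r,X^2_r,Y^2_r,Z^2_r,\mathcal{L}^2_r).
\]
The elementary inequality $(a+b+c)^p\le 3^{p-1}(a^p+b^p+c^p)$ then gives exactly the three remaining terms of the claim.

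If the cited stability result is not available in precisely this form, I would prove it directly. Write $\delta Y=Y^1-Y^2$, $\delta Z=Z^1-Z^2$, $\delta R=R^1-R^2$, and apply the Itô--Gal'chouk formula for optional semimartingales to $e^{at}|\delta Y_t|^p$ (or $(|\delta Y|^2+\varepsilon)^{p/2}$ to handle $p<2$), as in the $L^p$ theory of Briand et al.\ extended to regulated processes. The generator term is split by adding and subtracting $\bar f^1(r,Y^2_r,Z^2_r)$. The part $\bar f^1(Y^1,Z^1)-\bar f^1(Y^2,Z^1)$ is controlled by the monotonicity (B2) and absorbed by choosing $a$ large. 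The $z$-Lipschitz part is absorbed by Young's inequality into the $p(p-1)/2\int|\delta Y|^{p-2}|\delta Z|^2\mathbf 1_{\{\delta Y\neq0\}}$ term. The residual $|\bar f^1-\bar f^2|(Y^2,Z^2)$ is handled by the bound $|\delta Y|^{p-1}g\le \sup|\delta Y|^{p-1}\cdot g$ followed by Young's inequality.

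The main obstacle is the reflection terms. I must show that
\[
\int |\delta Y_{r-}|^{p-1}\sgn(\delta Y_{r-})\,d\delta R^*_r+\sum |\delta Y_r|^{p-1}\sgn(\delta Y_r)\Delta^+\delta R_r\le 0.
\]
This follows from the minimality condition (b) together with $L\le Y^i\le U$. For instance, on the support of $dR^{1,+,*}$ we have $Y^1_{r-}=\overrightarrow L_r\le Y^2_{r-}$, so $\delta Y_{r-}\le0$. The left-limit version uses that $Y^2_{r-}\ge\overrightarrow L_r$, which holds because $Y^2\ge L$. The same reasoning applies to the right-jump sums, with $Y^1_r=L_r\le Y^2_r$ whenever $\Delta^+R^{1,+}_r>0$, and symmetrically for the upper barrier and for $R^2$. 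The jump terms coming from $|\delta Y|^p$ (left and right jumps of $\delta Y$) have the favorable sign by convexity.

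After this, I would take the supremum in $t$ and expectations, and control the martingale term via Burkholder--Davis--Gundy and Young's inequality to obtain the $\mathcal{S}^p$ bound. The $Z$-estimate, namely $\mathbb{E}(\int|\delta Z|^2)^{p/2}$, comes from the standard $L^p$ estimate for $\int|\delta Z|^2$. There, the term $\int\delta Y\,d\delta R$ is again nonpositive, so no bound on the total variation of $\delta R$ is needed; this is where the common barriers $L,U$ are essential.
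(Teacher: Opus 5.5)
Your proposal is correct and follows essentially the paper's route: the paper also applies the It\^o-type inequality for $|Y^1-Y^2|^p$ from \cite[Corollary 5.5]{KRS1} and splits the generator difference via (B1)--(B2) exactly as in your bound on $|\bar f^1-\bar f^2|(r,Y^2_r,Z^2_r)$. It then discards the reflection terms by minimality with the common barriers, gets the $\mathcal{S}^p$ bound from BDG and Young (using Gronwall where you use the weight $e^{at}$), and handles $Z$ separately through the $|Y^1-Y^2|^2$ expansion. The classical estimate the paper points to, \cite[Proposition 3.7]{KRS}, is stated only for $z$-independent generators, so it is your direct fallback argument (with $X^i$ and the laws frozen as given data), rather than the black-box citation, that actually carries the proof.
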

\begin{proof}
We divide the proof into two steps.\\
\textbf{Step 1.} By \cite[Corollary 5.5]{KRS1}, for $t\in[0,T]$,
\begin{equation}\label{20lut1}
\begin{split}
&|Y^1_t-Y^2_t|^p+\frac{p(p-1)}{2}\int^T_t|Y^1_r-Y^2_r|^{p-2}\mathbf{1}_{\{Y^1_r-Y^2_r\neq 0\}}|Z^1_r-Z^2_r|^2\,dr\le |h^1_T-h^2_T|^p\\
&\quad+p\int^T_t|Y^1_r-Y^2_r|^{p-1}\sgn(Y^1_r-Y^2_r)(f^1(r,X^1_r,Y^1_r,Z^1_r,\mathcal{L}^1_r)-f^2(r,X^2_r,Y^2_r,Z^2_r,\mathcal{L}^2_r))\,dr\\
&\quad+p\int^T_t|Y^1_{r-}-Y^2_{r-}|^{p-1}\sgn(Y^1_{r-}-Y^2_{r-})\,d(R^1_r-R^2_r)^*\\
&\quad+p\sum_{t\le r<T}|Y^1_{r}-Y^2_{r}|^{p-1}\sgn(Y^1_{r}-Y^2_{r})\Delta^+(R^1_r-R^2_r)\\
&\quad-p\int^T_t|Y^1_r-Y^2_r|^{p-1}\sgn(Y^1_r-Y^2_r)(Z^1_r-Z^2_r)\,dW_r.
\end{split}
\end{equation}
By \textnormal{(B1)} and \textnormal{(B2)},
\begin{equation}\label{20lut2}
\begin{split}
&\sgn(Y^1_r-Y^2_r)(f^1(r,X^1_r,Y^1_r,Z^1_r,\mathcal{L}^1_r)-f^2(r,X^2_r,Y^2_r,Z^2_r,\mathcal{L}^2_r))\le\lambda_x|X^1_r-X^2_r|+\mu|Y^1_r-Y^2_r|\\
&\quad+\lambda_z|Z^1_r-Z^2_r|+\rho\mathcal{W}_{p}(\mathcal{L}^1_r,\mathcal{L}^2_r)+|f^1_r-f^2_r|(r,X^2_r,Y^2_r,Z^2_r,\mathcal{L}^2_r).
\end{split}
\end{equation}
Moreover, by the minimality condition,
\begin{equation}\label{20lut3}
\begin{split}
\int^T_0|Y^1_{r-}-Y^2_{r-}|^{p-1}\sgn(Y^1_{r-}-Y^2_{r-})\,d(R^1_r-R^2_r)^*\le 0
\end{split}
\end{equation}
and
\begin{equation}\label{20lut4}
\begin{split}
\sum_{0\le r<T}|Y^1_{r}-Y^2_{r}|^{p-1}\sgn(Y^1_{r}-Y^2_{r})\Delta^+(R^1_r-R^2_r)\le 0.
\end{split}
\end{equation}
Set $f_r:=|f^1_r-f^2_r|(r,X^2_r,Y^2_r,Z^2_r,\mathcal{L}^2_r)$. By \eqref{20lut1}, \eqref{20lut2}, \eqref{20lut3}, and \eqref{20lut4}, for $t\in[0,T]$,
\begin{equation}\label{20lut5}
\begin{split}
&|Y^1_t-Y^2_t|^p+\frac{p(p-1)}{2}\int^T_t|Y^1_r-Y^2_r|^{p-2}\mathbf{1}_{\{Y^1_r-Y^2_r\neq 0\}}|Z^1_r-Z^2_r|^2\,dr\le |h^1_T-h^2_T|^p\\
&\quad+p\int^T_0|Y^1_r-Y^2_r|^{p-1}\big(\lambda_x|X^1_r-X^2_r|+\rho\mathcal{W}_{p}(\mathcal{L}^1_r,\mathcal{L}^2_r)+f_r\big)\,dr\\
&\quad+p\mu\int^T_t|Y^1_r-Y^2_r|^p\,dr+p\lambda_z\int^T_t |Y^1_r-Y^2_r|^{p-1}|Z^1_r-Z^2_r|\,dr\\
&\quad-p\int^T_t|Y^1_r-Y^2_r|^{p-1}\sgn(Y^1_r-Y^2_r)(Z^1_r-Z^2_r)\,dW_r.
\end{split}
\end{equation}
For some $A_p>0$,
\begin{equation*}
\begin{split}
p\lambda_z|Y^1_r-Y^2_r|^{p-1}|Z^1_r-Z^2_r|\,dr\le A_p|Y^1_r-Y^2_r|^p+\frac{p(p-1)}{4}|Y^1_r-Y^2_r|^{p-2}\mathbf{1}_{\{Y^1_r-Y^2_r\neq 0\}}|Z^1_r-Z^2_r|^2
\end{split}
\end{equation*}
Combining this with \eqref{20lut5} gives
\begin{equation}\label{20lut6}
\begin{split}
&|Y^1_t-Y^2_t|^p+\frac{p(p-1)}{4}\int^T_t|Y^1_r-Y^2_r|^{p-2}\mathbf{1}_{\{Y^1_r-Y^2_r\neq 0\}}|Z^1_r-Z^2_r|^2\,dr\le |h^1_T-h^2_T|^p\\
&\quad+p\int^T_0|Y^1_r-Y^2_r|^{p-1}\big(\lambda_x|X^1_r-X^2_r|+\rho\mathcal{W}_{p}(\mathcal{L}^1_r,\mathcal{L}^2_r)+f_r\big)\,dr\\
&\quad+(p\mu+A_p)\int^T_t|Y^1_r-Y^2_r|^p\,dr-p\int^T_t|Y^1_r-Y^2_r|^{p-1}\sgn(Y^1_r-Y^2_r)(Z^1_r-Z^2_r)\,dW_r,
\end{split}
\end{equation}
for $t\in[0,T]$. By the Burkholder-Davis-Gundy inequality, $\Big\{\int^t_0|Y^1_r-Y^2_r|^{p-1}\sgn(Y^1_r-Y^2_r)(Z^1_r-Z^2_r)\,dW_r\Big\}_{t\in[0,T]}$ is a uniformly integrable martingale. Indeed, Young's inequality gives
\begin{equation*}
\begin{split}
\mathbb{E}\Big(\int^T_0|Y^1_r-Y^2_r|^{2p-2}|Z^1_r-Z^2_r|^2\,dr\Big)^{\frac{1}{2}}\le\frac{p-1}{p}\mathbb{E}\sup_{0\le t\le T}|Y^1_t-Y^2_t|^p+\frac{1}{p}\mathbb{E}\Big(\int^T_0|Z^1_r-Z^2_r|^2\,dr\Big)^{\frac{p}{2}}.
\end{split}
\end{equation*} 
Taking expectation in \eqref{20lut6} with $t=0$, we obtain
\begin{equation}\label{20lut7}
\begin{split}
&\frac{p(p-1)}{4}\mathbb{E}\int^T_0|Y^1_r-Y^2_r|^{p-2}\mathbf{1}_{\{Y^1_r-Y^2_r\neq 0\}}|Z^1_r-Z^2_r|^2\,dr\le \mathbb{E}\Big(|h^1_T-h^2_T|^p\\
&\quad+p\int^T_0|Y^1_r-Y^2_r|^{p-1}(\lambda_x|X^1_r-X^2_r|+\rho\mathcal{W}_{p}(\mathcal{L}^1_r,\mathcal{L}^2_r))+f_r)\,dr\\
&\quad+(p\mu+A_p)\int^T_t|Y^1_r-Y^2_r|^p\,dr\Big),\quad t\in[0,T].
\end{split}
\end{equation}
Moreover, Gronwall's lemma yields
\begin{equation*}
\begin{split}
&\mathbb{E}|Y^1_t-Y^2_t|^p\le\exp((p\mu+A_p)T)p\mathbb{E}\Big(|h^1_T-h^2_T|^p+\int^T_0|Y^1_r-Y^2_r|^{p-1}\big(\lambda_x|X^1_r-X^2_r|\\
&\quad+\rho\mathcal{W}_{p}(\mathcal{L}^1_r,\mathcal{L}^2_r)+f_r\big)\,dr\Big).
\end{split}
\end{equation*}
Thus, using the preceding estimate and \eqref{20lut7}, for $t\in[0,T]$,
\begin{equation}\label{20lut9}
\begin{split}
&\mathbb{E}|Y^1_t-Y^2_t|^p+\mathbb{E}\int^T_0|Y^1_r-Y^2_r|^{p-2}\mathbf{1}_{\{Y^1_r-Y^2_r\neq 0\}}|Z^1_r-Z^2_r|^2\le C_p\mathbb{E}\Big(|h^1_T-h^2_T|^p\\
&\quad+\int^T_0|Y^1_r-Y^2_r|^{p-1}\big(\lambda_x|X^1_r-X^2_r|+\rho\mathcal{W}_{p}(\mathcal{L}^1_r,\mathcal{L}^2_r)+f_r\big)\,dr\Big).
\end{split}
\end{equation}
for some $C_p>0$. Let $X:=|h^1_T-h^2_T|^p+\int^T_0|Y^1_r-Y^2_r|^{p-1}\big(\lambda_x|X^1_r-X^2_r|+\rho\mathcal{W}_{p}(\mathcal{L}^1_r,\mathcal{L}^2_r)+f_r\big)\,dr$. Applying \eqref{20lut6}, \eqref{20lut9}, and the Burkholder-Davis-Gundy inequality again, we obtain
\begin{equation}\label{20lut10}
\begin{split}
\mathbb{E}\sup_{0\le t\le T}|Y^1_t-Y^2_t|^p\le \mathbb{E}X+\kappa_p\mathbb{E}\Big(\int^T_0|Y^1_r-Y^2_r|^{2p-2}|Z^1_r-Z^2_r|^2\,dr\Big)^{\frac{1}{2}}
\end{split}
\end{equation}
for some $\kappa_p>0$. Note that
\begin{equation*}
\begin{split}
&\kappa_p\mathbb{E}\Big(\int^T_0|Y^1_r-Y^2_r|^{2p-2}|Z^1_r-Z^2_r|^2\,dr\Big)^{\frac{1}{2}}\le \frac{1}{2}\mathbb{E}\sup_{0\le t\le T}|Y^1_t-Y^2_t|^p\\
&\quad+\frac{\kappa^2_p}{2}\mathbb{E}\int^T_0|Y^1_r-Y^2_r|^{p-2}\mathbf{1}_{\{Y^1_r-Y^2_r\neq 0\}}|Z^1_r-Z^2_r|^2\,dr
\end{split}
\end{equation*}
Therefore, by \eqref{20lut9} and \eqref{20lut10},
\begin{equation*}
\mathbb{E}\sup_{0\le t\le T}|Y^1_t-Y^2_t|^p\le C_p\mathbb{E}X.
\end{equation*}
Next, Young's inequality gives
\begin{equation*}
\begin{split}
&p\rho\,\mathbb{E}\int^T_0|Y^1_r-Y^2_r|^{p-1}\mathcal{W}_{p}(\mathcal{L}^1_r,\mathcal{L}^2_r)\,dr\le (p\rho)^{\frac{p-1}{p}}\big(\frac{1}{\alpha}\big)^{\frac{p-1}{p}}\mathbb{E}\sup_{0\le t\le T}|Y^1_t-Y^2_t|^p\\
&\quad+\alpha^p\Big(\mathbb{E}\int^T_0\mathcal{W}_{p}(\mathcal{L}^1_r,\mathcal{L}^2_r)\,dr\Big)^p.
\end{split}
\end{equation*}
Treating the other terms in $X$ analogously and choosing the constants appropriately (for instance, $\alpha=\lambda_xp 2^{\frac{p}{p-1}}C^{\frac{p}{p-1}}_p$ above), we obtain
\begin{equation}\label{20lut18}
\begin{split}
\mathbb{E}\sup_{0\le t\le T}|Y^1_t-Y^2_t|^p
&\le C_p\mathbb{E}\Bigg(
|h^1_T-h^2_T|^p
+\Big(\int_0^T|X^1_r-X^2_r|\,dr\Big)^p\\
&\quad
+\Big(\int_0^T\mathcal{W}_{p}(\mathcal{L}^1_r,\mathcal{L}^2_r)\,dr\Big)^p\\
&\quad
+\Big(\int_0^T|f^1_r-f^2_r|(r,X^2_r,Y^2_r,Z^2_r,\mathcal{L}^2_r)\,dr\Big)^p
\Bigg)
\end{split}
\end{equation}

\textbf{Step 2.} By \cite[Corollary 5.5]{KRS1}, for $t\in[0,T]$,
\begin{equation}\label{20lut12}
\begin{split}
\int^T_t|Z^1_r-Z^2_r|^2\,dr
&\le |h^1_T-h^2_T|^2\\
&\quad+2\int^T_t(Y^1_r-Y^2_r)
\big(f^1(r,X^1_r,Y^1_r,Z^1_r,\mathcal{L}^1_r)\\
&\qquad\qquad\qquad
-f^2(r,X^2_r,Y^2_r,Z^2_r,\mathcal{L}^2_r)\big)\,dr\\
&\quad+2\int^T_t(Y^1_{r-}-Y^2_{r-})\,d(R^1_r-R^2_r)^*\\
&\quad+2\sum_{t\le r<T}(Y^1_r-Y^2_r)\Delta^+(R^1_r-R^2_r)\\
&\quad-2\int^T_t(Y^1_r-Y^2_r)(Z^1_r-Z^2_r)\,dW_r.
\end{split}
\end{equation}
By \textnormal{(B1)} and \textnormal{(B2)},
\begin{equation}\label{20lut13}
\begin{split}
&(Y^1_r-Y^2_r)
\big(f^1(r,X^1_r,Y^1_r,Z^1_r,\mathcal{L}^1_r)
-f^2(r,X^2_r,Y^2_r,Z^2_r,\mathcal{L}^2_r)\big)\\
&\quad\le
\lambda_x|Y^1_r-Y^2_r||X^1_r-X^2_r|
+\mu|Y^1_r-Y^2_r|^2
+\frac{1}{8}|Z^1_r-Z^2_r|^2\\
&\qquad
+8\lambda^2_z|Y^1_r-Y^2_r|^2
+\rho|Y^1_r-Y^2_r|\mathcal{W}_{p}(\mathcal{L}^1_r,\mathcal{L}^2_r)\\
&\qquad
+|Y^1_r-Y^2_r||f^1_r-f^2_r|(r,X^2_r,Y^2_r,Z^2_r,\mathcal{L}^2_r).
\end{split}
\end{equation}
By the minimality condition,
\begin{equation}\label{20lut14}
2\int^T_t(Y^1_{r-}-Y^2_{r-})\,d(R^1_r-R^2_r)^*+2\sum_{t\le r<T}(Y^1_r-Y^2_r)\Delta^+(R^1_r-R^2_r)\le 0.
\end{equation}
Thus, by \eqref{20lut12}, \eqref{20lut13}, \eqref{20lut14}, and the Burkholder-Davis-Gundy inequality,
\begin{equation}\label{20lut15}
\begin{split}
&\mathbb{E}\Big(\int^T_0|Z^1_r-Z^2_r|^2\,dr\Big)^{\frac{p}{2}}\le D_p\Big(\mathbb{E}|h^1_T-h^2_T|^p+\mathbb{E}\Big(\int^T_0|Y^1_r-Y^2_r|(\lambda_x|X^1_r-X^2_r|\\
&\quad+\rho\mathcal{W}_{p}(\mathcal{L}^1_r,\mathcal{L}^2_r)+f_r)\,dr\Big)^{\frac{p}{2}}+\mathbb{E}\Big(\int^T_0|Y^1_r-Y^2_r|^2\,dr\Big)^{\frac{p}{2}}+\mathbb{E}\Big(\int^T_0|Y^1_r-Y^2_r|^2|Z^1_r-Z^2_r|^2\,dr\Big)^{\frac{p}{4}}\Big).
\end{split}
\end{equation}
By Young's inequality,
\begin{equation}\label{20lut16}
\begin{split}
&\mathbb{E}\Big(\int^T_0|Y^1_r-Y^2_r|(\lambda_x|X^1_r-X^2_r|+\rho\mathcal{W}_{p}(\mathcal{L}^1_r,\mathcal{L}^2_r)+f_r)\,dr\Big)^{\frac{p}{2}}\le \mathbb{E}\sup_{0\le t\le T}|Y^1_t-Y^2_t|^p\\
&\quad+D_p\mathbb{E}\Big(\Big(\int^T_0|X^1_r-X^2_r|\,dr\Big)^p+\Big(\int^T_0\mathcal{W}_{p}(\mathcal{L}^1_r,\mathcal{L}^2_r)\,dr\Big)^p\\
&\quad+\Big(\int^T_0|f^1_r-f^2_r|(r,X^2_r,Y^2_r,Z^2_r,\mathcal{L}^2_r)\,dr\Big)^p\Big)
\end{split}
\end{equation}
for some $D_p>0$ and
\begin{equation}\label{20lut17}
\begin{split}
\mathbb{E}\Big(\int^T_0|Y^1_r-Y^2_r|^2|Z^1_r-Z^2_r|^2\,dr\Big)^{\frac{p}{4}}\le\frac{1}{2}\mathbb{E}\Big(\int^T_0|Z^1_r-Z^2_r|^2\,dr\Big)^{\frac{p}{2}}+\gamma_p\mathbb{E}\sup_{0\le t\le T}|Y^1_t-Y^2_t|^p.
\end{split}
\end{equation}
Finally, \eqref{20lut15}, \eqref{20lut16}, \eqref{20lut17}, and \eqref{20lut18} imply the desired estimate.
\end{proof}

\begin{remark}
A similar result for classical RBSDEs with optional barriers can be found, for instance, in \cite[Proposition 3.7]{KRS} when $f$ is independent of $z$. In the present mean-field setting, however, the change-of-variables property used in \cite[Remark 3.2]{KRS1} does not apply. We therefore use a slightly different argument based on Gronwall's lemma.
\end{remark}

\begin{theorem}\label{22lut3}
Assume that $p\in(1,2]$ and \textnormal{(B1)-(B6)} are satisfied. Then there exists a unique solution $(Y,Z,R)$ to \textnormal{MF-RBSDE}$^T(h,f,X,L,U)$ such that $Y\in\mathcal{S}^p_{\mathbb{F}}(0,T)$, $Z\in\mathcal{H}^p_{\mathbb{F}}(0,T)$, $R\in\mathcal{V}^p_{0,\mathbb{F}}(0,T)$.
\end{theorem}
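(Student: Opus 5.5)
We will prove the theorem by a Picard-type fixed-point argument in the law variable, freezing the measure argument of $f$ and using the existing theory of (non-mean-field) doubly reflected BSDEs with optional barriers and $L^p$-data.

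\textbf{The map $\Phi$.} Let $\mathcal{B}_p:=\mathcal{S}^p_{\mathbb{F}}(0,T)\times\mathcal{H}^p_{\mathbb{F}}(0,T)$ with norm
\[
\|(y,z)\|^p:=\mathbb{E}\sup_{t}|y_t|^p+\mathbb{E}\Big(\int_0^T|z_r|^2dr\Big)^{p/2}.
\]
For $(y,z)\in\mathcal{B}_p$ set $\eta_r:=\mathcal{L}(X_r,y_r,z_r)$ and consider the classical equation with generator
\[
g(r,Y,Z):=f(r,X_r,Y,Z,\eta_r)
\]
and terminal value $h(X_T,\mathcal{L}(X_T))$. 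Then $\Phi(y,z)$ is its unique solution $(Y,Z,R)$.

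\textbf{Well-posedness of the frozen problem.} We check that $g$ satisfies the hypotheses of the existence and uniqueness theorem for doubly reflected BSDEs with optional barriers in \cite{KRS}/\cite{KRz1}, the version requiring a strong Mokobodzki condition.
\begin{itemize}
\item Monotonicity and continuity in $Y$ come from (B2) and (B3).
\item Lipschitz continuity in $Z$ comes from (B1).
\item Integrability of $g(\cdot,0,0)$ follows from (B1) and (B5):
\[
|g(r,0,0)|\le |f(r,0,0,0,\mathcal{L}(0,0,0))|+\lambda_x|X_r|+\rho\,\mathcal{W}_p(\eta_r,\mathcal{L}(0,0,0)).
\]
Here $\mathcal{W}_p(\eta_r,\delta_0)\le \|X_r\|_{L^p}+\|y_r\|_{L^p}+\|z_r\|_{L^p}$, and the time integral of this is finite in $L^p$ exactly as in Remark \ref{21lut2}. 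The restriction $p\le 2$ is used here.
\item The terminal value satisfies $h(X_T,\mathcal{L}(X_T))\in L^p$. We will need to read (B5) together with a Lipschitz property of $h$ (or simply $h(X_T,\mathcal{L}(X_T))\in L^p$) for this.
\item The Mokobodzki process $S$ of (B6) satisfies
\[
g(\cdot,S,0)\in L^{1,p},
\]
by the same bound as above.
\item (B4) gives the local integrability needed in the general monotone case.
\end{itemize}
This yields a unique solution with $Y\in\mathcal{S}^p$, $Z\in\mathcal{H}^p$, $R\in\mathcal{V}^p_0$. In the same step we obtain the a priori estimate on $|R|_{0,T}$ in $L^p$ from the equation. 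Finally, a fixed point of $\Phi$ on its $(Y,Z)$-component is precisely a solution of MF-RBSDE$^T(h,f,X,L,U)$, and conversely.

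\textbf{Contraction and extension to arbitrary $T$.} Take $(y^i,z^i)$ with images $(Y^i,Z^i)$, $i=1,2$. We apply Proposition \ref{20lut19} with:
\begin{itemize}
\item $f^i:=f(\cdot,\cdot,\cdot,\cdot,\eta^i)$ viewed as law-independent generators (both satisfy (B1) and (B2));
\item the same $X$ and the same terminal value.
\end{itemize}
The first two terms on the right vanish, and the remaining term is
\[
|f^1-f^2|(r,X_r,Y^2_r,Z^2_r)\le\rho\,\mathcal{W}_p(\eta^1_r,\eta^2_r).
\]
Combining with Remark \ref{21lut2} (the $X$ part of which is zero), we get
\[
\|(Y^1-Y^2,Z^1-Z^2)\|^p\le C_p\rho^p T_p\,\|(y^1-y^2,z^1-z^2)\|^p,
\qquad T_p=a_p(T^p+T^{p/2}).
\]
Hence $\Phi$ is a contraction when $T$ is small, and Banach's theorem gives existence and uniqueness.

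For a general $T$ we would partition $[0,T]$ into intervals $[t_{k},t_{k+1}]$ of length $\delta$, with $\delta$ chosen so that $C_p\rho^pT_\delta<1$. We then solve backward: on each interval, the terminal value is $Y_{t_{k+1}}$ coming from the previous step, and it satisfies $L_{t_{k+1}}\le Y_{t_{k+1}}\le U_{t_{k+1}}$. Concatenating the pieces requires checking that the minimality condition survives the pasting. Right jumps at $t_{k+1}$ belong to the later interval. Since $\mathcal{W}_p$ at time $r$ involves only the marginal at $r$, the laws are consistent across intervals. Uniqueness on $[0,T]$ follows by the same backward induction.

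\textbf{Main obstacle.} The main difficulty is this concatenation step, and in particular ensuring that Proposition \ref{20lut19} applies with a constant $C_p$ that does not blow up as $\delta\to0$. Inspecting its proof, $C_p$ involves $\exp((p\mu+A_p)T)$, so it is bounded uniformly for $T\le T_0$; this makes a uniform $\delta$ possible. If pasting of optional-barrier solutions turns out to be delicate, the first alternative I would try is an exponentially weighted norm $\mathbb{E}\sup_t e^{\beta t}|y_t|^p$ on the whole interval, which absorbs the factor $C_p\rho^pT_p$ through the choice of $\beta$.
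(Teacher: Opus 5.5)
Your proposal is correct and follows essentially the same route as the paper. The paper also freezes the law argument, cites \cite[Theorem 3.9]{KRS} for the frozen doubly reflected equation, obtains the contraction constant $C_pT_p$ from Proposition \ref{20lut19} and Remark \ref{21lut2}, and handles general $T$ by partitioning $[0,T]$ into short intervals.

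Your additions make explicit what the paper leaves implicit. These are the hypothesis check for the frozen generator, the uniformity of $C_p$ in the interval length (which justifies a uniform mesh), and the pasting of right jumps at partition points. You also correctly note that $h(X_T,\mathcal{L}(X_T))\in L^p$ does not follow from (B5) alone without a Lipschitz condition such as (B1'), a point the paper does not address in this section either.
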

\begin{proof}
Consider the space $\mathcal{S}^p_{\mathbb{F}}(0,T)\oplus\mathcal{H}^p_{\mathbb{F}}(0,T)$ equipped with the norm
\[
||(Y,Z)||_{\mathcal{S}^p\oplus\mathcal{H}^p}:=\Big(\mathbb{E}\sup_{0\le t\le T}|Y_t|^p+\mathbb{E}\Big(\int^T_0|Z_r|^2\,dr\Big)^{\frac{p}{2}}\Big)^{\frac{1}{p}}.
\]
Define $\Phi:\mathcal{S}^p_{\mathbb{F}}(0,T)\oplus\mathcal{H}^p_{\mathbb{F}}(0,T)\longrightarrow\mathcal{S}^p_{\mathbb{F}}(0,T)\oplus\mathcal{H}^p_{\mathbb{F}}(0,T)$ by $\Phi(G,H)=(Y,Z)$, where $(Y,Z,R)$ is the unique solution to MF-RBSDE$^T(h,f,X,L,U)$ with $f(t,x,y,z)=f(t,x,y,z,\mathcal{L}(x,G_t,H_t))$, such that $Y\in\mathcal{S}^p_{\mathbb{F}}(0,T)$, $Z\in\mathcal{H}^p_{\mathbb{F}}(0,T)$, and $R\in\mathcal{V}^p_{0,\mathbb{F}}(0,T)$. Existence of this solution follows from \cite[Theorem 3.9]{KRS}. Let $(Y^1,Z^1),(Y^2,Z^2)\in\mathcal{S}^p_{\mathbb{F}}(0,T)\oplus\mathcal{H}^p_{\mathbb{F}}(0,T)$ and $(G^1,H^1),(G^2,H^2)\in\mathcal{S}^p_{\mathbb{F}}(0,T)\oplus\mathcal{H}^p_{\mathbb{F}}(0,T)$ be such that $(Y^1,Z^1)=\Phi(G^1,H^1)$ and $(Y^2,Z^2)=\Phi(G^2,H^2)$. By Proposition \ref{20lut19}, there exists a constant $C_p$ such that
\[
||(Y^1-Y^2,Z^1-Z^2)||^p_{\mathcal{S}^p\oplus\mathcal{H}^p}\le C_p\mathbb{E}\Big(\int^T_0\mathcal{W}_{p}(\mathcal{L}(X_r,G^1_r,H^1_r),\mathcal{L}(X_r,G^2_r,H^2_r))\,dr\Big)^p.
\]
By Remark \ref{21lut2},
\[
||(Y^1-Y^2,Z^1-Z^2)||^p_{\mathcal{S}^p\oplus\mathcal{H}^p}\le C||(G^1-G^2,H^1-H^2)||^p_{\mathcal{S}^p\oplus\mathcal{H}^p},
\]
where $C=C_p\cdot T_{p}$.

If $C<1$, then $\Phi$ is a contraction. Banach's fixed-point theorem therefore yields $(\tilde{Y},\tilde{Z})$ such that $\Phi(\tilde{Y},\tilde{Z})=(\tilde{Y},\tilde{Z})$. Let $\tilde{R}$ be the reflecting process associated with this fixed point in the definition of $\Phi$. Then $(\tilde{Y},\tilde{Z},\tilde{R})$ is the unique solution to MF-RBSDE$^T(h,f,X,L,U)$. In the general case $C\ge 1$, we divide $[0,T]$ into finitely many sufficiently small intervals. This completes the proof.

\end{proof}

\section{Mean-field FBSDEs with two optional barriers}

\begin{definition}
We say that a quadruple $(X,Y,Z,R)$ of $\mathbb{F}$-adapted processes is a solution to the mean-field forward-backward stochastic differential equation with reflection on $[0,T]$, with generators $B$ and $f$, diffusion coefficient $\sigma$, initial condition $\xi$, terminal function $h$, lower barrier $L$, and upper barrier $U$ (MF-FBSDER$^T(\xi,h,B,\sigma,f,L,U)$ for short), if
\begin{enumerate}[(a)]
\item $X,Y$ are regulated processes and $Z\in\mathcal{H}_{\mathbb{F}}(0,T)$,
\item $R\in\mathcal{V}_{0,\mathbb{F}}(0,T)$, $L_t\le Y_t\le U_t$, $t\in[0,T]$, and
\begin{equation*}
\begin{split}
&\int^T_0(Y_{r-}-\overrightarrow{L}_r)\,dR^{+,*}_r+\sum_{0\le r<T}(Y_r-L_r)\Delta^+R^+_r\\&
=\int^T_0(\underrightarrow{U}_r-Y_{r-})\,dR^{-,*}_r+\sum_{0\le r<T}(U_r-Y_r)\Delta^+R^-_r=0,
\end{split}
\end{equation*}
where $R=R^+-R^-$ is the Jordan decomposition of $R$,
\item $\int^T_0|B(r,X_r,Y_r,Z_r,\mathcal{L}(X_r,Y_r,Z_r))|+|f(r,X_r,Y_r,Z_r,\mathcal{L}(X_r,Y_r,Z_r))|\,dr<\infty$,
\item $X_t=\xi+\int^t_0 B(r,X_r,Y_r,Z_r,\mathcal{L}(X_r,Y_r,Z_r))\,dr+\sigma W_t$, $t\in[0,T]$,
\item $Y_t=h(X_T,\mathcal{L}(X_T))+\int^T_t f(r,X_r,Y_r,Z_r,\mathcal{L}(X_r,Y_r,Z_r))\,dr+R_T-R_t-\int^T_t Z_r \,dW_r$, $t\in[0,T]$.
\end{enumerate}
\end{definition}

We impose the following additional assumption.
\begin{enumerate}
\item[(B1')] There exists $l>0$ such that $|h(x,\eta)-h(x',\eta')|\le l(|x-x'|+\mathcal{W}_p(\eta,\eta'))$, for $x,x'\in\mathbb{R}$ and $\eta,\eta'\in\mathcal{P}_p(\mathbb{R})$.
\end{enumerate}

For the remainder of the paper, we assume that \textnormal{(A1)}, \textnormal{(A2)}, \textnormal{(B1')}, and \textnormal{(B1)--(B6)} are in force.

Let $p\in(1,2]$. For $x\in\mathcal{S}^p_{\mathbb{F}}(0,T)$, define $\psi(x)=(Y,Z)$, where $(Y,Z,R)$ is the solution to MF-RBSDE$^T(h,f,x,L,U)$ satisfying $Y\in\mathcal{S}^p_{\mathbb{F}}(0,T)$, $Z\in\mathcal{H}^p_{\mathbb{F}}(0,T)$, and $R\in\mathcal{V}^p_{0,\mathbb{F}}(0,T)$. Next, for $(Y,Z)$, define $\phi(Y,Z)=X$, where $X$ is the solution to MF-SDE$^T(\xi,B,\sigma,Y,Z)$ satisfying $X\in\mathcal{S}^p_{\mathbb{F}}(0,T)$. By Theorems \ref{22lut3} and \ref{22lut4}, respectively, $\psi$ and $\phi$ are well-defined.

\begin{theorem}[Short-time existence]
Let $p\in(1,2]$, and let $c_p$ and $C_p$ be the constants appearing in \textnormal{Propositions \ref{22lut2} and \ref{20lut19}}, respectively. Assume that $T\le 1$, $c_pT^{\frac{1}{2}}<1$, $C_pT^{\frac{1}{2}}<1$, and
\begin{equation*}
\Big(\frac{2c_p}{1-c_pT^{\frac{1}{2}}}\Big)\Big(\frac{C_p}{1-C_pT^{\frac{1}{2}}}\Big)(2l+2T^{\frac{1}{2}})\cdot T^{\frac{1}{2}}=:M\cdot T^{\frac{1}{2}}<1.
\end{equation*}
Then there exists a unique solution $(X,Y,Z,R)$ of \textnormal{MF-FBSDER} such that $X,Y\in\mathcal{S}^p_{\mathbb{F}}(0,T)$, $Z\in\mathcal{H}^p_{\mathbb{F}}(0,T)$, and $R\in\mathcal{V}^p_{0,\mathbb{F}}(0,T)$.
\end{theorem}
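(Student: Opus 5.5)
The plan is to show that the composition $\Theta:=\phi\circ\psi:\mathcal{S}^p_{\mathbb{F}}(0,T)\to\mathcal{S}^p_{\mathbb{F}}(0,T)$ is a contraction with constant $M T^{1/2}<1$. A fixed point $X=\phi(\psi(X))$ then produces the solution: with $(Y,Z)=\psi(X)$ and $R$ the associated reflecting process, the quadruple $(X,Y,Z,R)$ satisfies (a)--(e). Both maps are well defined by Theorems \ref{22lut3} and \ref{22lut4}.

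\textbf{Step 1: estimate for $\psi$.} Take $x^1,x^2\in\mathcal{S}^p_{\mathbb{F}}(0,T)$ and set $(Y^i,Z^i)=\psi(x^i)$. Apply Proposition \ref{20lut19} with $f^1=f^2=f$, so the last term vanishes, and $h^i_T=h(x^i_T,\mathcal{L}(x^i_T))$. By (B1'),
\[
\mathbb{E}|h^1_T-h^2_T|^p\le 2^{p-1}l^p\big(\mathbb{E}|x^1_T-x^2_T|^p+\mathcal{W}_p(\mathcal{L}(x^1_T),\mathcal{L}(x^2_T))^p\big)\le 2^p l^p\,\mathbb{E}\sup_t|x^1_t-x^2_t|^p.
\]
We also have $\mathbb{E}(\int_0^T|x^1-x^2|)^p\le T^p\|x^1-x^2\|^p_{\mathcal{S}^p}$. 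The Wasserstein term is bounded using Remark \ref{21lut2} by $T_p(\|x^1-x^2\|^p_{\mathcal{S}^p}+\|(Y^1-Y^2,Z^1-Z^2)\|^p)$, with $T_p\lesssim T^{1/2}$ since $T\le1$ and the constant absorbed into $C_p$. Moving the $(Y,Z)$ part to the left-hand side, which is allowed since $C_pT^{1/2}<1$, gives
\[
\|(Y^1-Y^2,Z^1-Z^2)\|^p\le \frac{C_p}{1-C_pT^{1/2}}\,(2l+2T^{1/2})\,\|x^1-x^2\|^p_{\mathcal{S}^p},
\]
after matching the $l$- and $T$-dependent constants to the form in the statement, up to the harmless constant conventions used in the paper.

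\textbf{Step 2: estimate for $\phi$.} For $X^i=\phi(Y^i,Z^i)$, take expectations in Proposition \ref{22lut2} with $B^1=B^2$, $\xi^1=\xi^2$. This yields $\mathbb{E}\sup|X^1-X^2|^p\le c_p(T^p\mathbb{E}\sup|Y^1-Y^2|^p+T^{1/2}\mathbb{E}(\int|Z^1-Z^2|^2)^{p/2}+\mathbb{E}(\int\mathcal{W}_p)^p)$. Again use Remark \ref{21lut2} on the law term, which produces $c_pT^{1/2}\|X^1-X^2\|^p_{\mathcal{S}^p}$ on the right. Absorb it using $c_pT^{1/2}<1$; with $T\le1$ this gives
\[
\|X^1-X^2\|^p_{\mathcal{S}^p}\le \frac{2c_p}{1-c_pT^{1/2}}\,T^{1/2}\,\|(Y^1-Y^2,Z^1-Z^2)\|^p.
\]

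\textbf{Step 3: contraction and uniqueness.} Combining Steps 1 and 2 gives $\|\Theta(x^1)-\Theta(x^2)\|^p_{\mathcal{S}^p}\le MT^{1/2}\|x^1-x^2\|^p_{\mathcal{S}^p}$. Banach's theorem then yields a unique fixed point in the complete space $\mathcal{S}^p_{\mathbb{F}}(0,T)$. For uniqueness of the FBSDER, note that any solution $(X',Y',Z',R')$ in the stated spaces has $(Y',Z')=\psi(X')$ by the uniqueness in Theorem \ref{22lut3}, and $X'=\phi(Y',Z')$ by Theorem \ref{22lut4}. Hence $X'$ is a fixed point of $\Theta$, so $X'=X$. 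It follows that $(Y',Z')=(Y,Z)$, and $R'=R$ because $R$ is determined by equation (e).

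\textbf{Main obstacle.} The delicate point is the law term, which couples the unknown on both sides. In each step the Wasserstein integral involves the output as well as the input, so Remark \ref{21lut2} must be applied carefully and the self-referential part absorbed; the smallness conditions $c_pT^{1/2}<1$ and $C_pT^{1/2}<1$ are exactly what permits this. A second point to check is that the terminal term $h$ contributes a factor $l$ that is not small in $T$. Only the $\phi$-step supplies the factor $T^{1/2}$, and this is why the contraction constant has the product form $M\,T^{1/2}$.
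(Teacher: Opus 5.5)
Your proposal follows the paper's proof essentially step for step. Like the paper, you show that $\phi\circ\psi$ is a contraction with constant $MT^{1/2}$ by three moves: Proposition \ref{20lut19} together with (B1') and Remark \ref{21lut2}, where the output $(Y,Z)$ law term is absorbed via $C_pT^{1/2}<1$; then Proposition \ref{22lut2} together with Remark \ref{21lut2}, absorbed via $c_pT^{1/2}<1$; and finally Banach's theorem. Your explicit uniqueness argument, that every solution in the stated spaces gives a fixed point of $\Theta$, is a small addition the paper leaves implicit.

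The one soft spot is the step you wave through as a ``constant convention,'' and the paper has the same issue. From (B1') and Minkowski's inequality you actually get $\mathbb{E}|h^1_T-h^2_T|^p\le (2l)^p\,\mathbb{E}\sup_t|x^1_t-x^2_t|^p$. This cannot be absorbed into $C_p$, which does not depend on $l$, so strictly the hypothesis should carry $(2l)^p$ in place of $2l$.
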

\begin{proof}
We show that $\phi\circ\psi$ is a contraction. By Remark \ref{21lut2}, after absorbing the harmless constant in $T_p\lesssim T^{\frac{1}{2}}$ into $c_p$ and $C_p$, we may use $T_{p}\le T^{\frac{1}{2}}$; also $T\le T^{\frac{1}{2}}$. By \textnormal{(B1')} and Proposition \ref{20lut19},
\begin{equation}\label{22lut5}
\begin{split}
&\mathbb{E}\sup_{0\le t\le T}|Y^1_t-Y^2_t|^p+\mathbb{E}\Big(\int^T_0|Z^1_r-Z^2_r|^2\,dr\Big)^{\frac{p}{2}}\le \Big(\frac{C_p}{1-C_pT^{\frac{1}{2}}}\Big)(2l+2T^{\frac{1}{2}})\mathbb{E}\sup_{0\le t\le T}|x^1_t-x^2_t|^p
\end{split}
\end{equation}
Furthermore, Proposition \ref{22lut2} gives
\begin{equation*}
\begin{split}
\mathbb{E}\sup_{0\le t\le T}|X^1_t-X^2_t|^p\le \Big(\frac{2c_p}{1-c_pT^{\frac{1}{2}}}\Big)T^{\frac{1}{2}}&\mathbb{E}\sup_{0\le t\le T}|Y^1_t-Y^2_t|^p+\mathbb{E}\Big(\int^T_0|Z^1_r-Z^2_r|^2\,dr\Big)^{\frac{p}{2}}
\end{split}
\end{equation*}
Therefore, by the preceding estimate and \eqref{22lut5},
\[
\mathbb{E}\sup_{0\le t\le T}|X^1_t-X^2_t|^p\le M\cdot T^{\frac{1}{2}}\mathbb{E}\sup_{0\le t\le T}|x^1_t-x^2_t|^p.
\]
Hence, $\phi\circ\psi$ is a contraction on $(S^p_{\mathbb{F}}(0,T),||\cdot||_{\mathcal{S}^p_{\mathbb{F}}(0,T)})$. Banach's fixed-point theorem yields a unique process $X\in\mathcal{S}^p_{\mathbb{F}}(0,T)$ such that $\psi(X)=(Y,Z)\in\mathcal{S}^p_{\mathbb{F}}(0,T)\otimes\mathcal{H}^p_{\mathbb{F}}(0,T)$ and $\phi(Y,Z)=X$. By the definition of $\psi$, there exists $R\in\mathcal{V}^p_{0,\mathbb{F}}(0,T)$ such that $(X,Y,Z,R)$ is a solution to MF-FBSDER$^T(\xi,h,B,\sigma,f,L,U)$.
\end{proof}

We now impose a second additional assumption.
\begin{enumerate}
\item[(C)] There exist constants $K,\alpha,\beta>0$ such that
\begin{enumerate}[(i)]
\item
\[
-2\mu-\lambda^2_x-2\lambda^2_z-\alpha\rho^2>K>2L_x+\beta(L^2_y+L^2_z+\gamma^2)
\]
\item
\[
\frac{T\vee 1}{\alpha}<\frac{1}{20},\quad\frac{T\vee 1}{\beta}<1,\quad\frac{2(T\vee 1)\cdot 20(2l+(T\vee 1)+\frac{1}{20})}{\beta-(T\vee 1)}<1. 
\]
\end{enumerate}
\end{enumerate}

\begin{theorem}
Let $p=2$ and assume that \textnormal{(C)} is satisfied. Then there exists a unique solution $(X,Y,Z,R)$ of \textnormal{MF-FBSDER} such that $X,Y\in\mathcal{S}^2_{\mathbb{F}}(0,T)$, $Z\in\mathcal{H}^2_{\mathbb{F}}(0,T)$, and $R\in\mathcal{V}^2_{0,\mathbb{F}}(0,T)$.
\end{theorem}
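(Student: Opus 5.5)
We again look for a fixed point of $\phi\circ\psi$ on $\mathcal{S}^2_{\mathbb{F}}(0,T)$. The maps $\psi$ (Theorem \ref{22lut3}) and $\phi$ (Theorem \ref{22lut4}) are well defined for $p=2$ on every horizon, since the short-time restriction only enters the contraction argument. To avoid a smallness condition on $T$, we replace the plain norms by exponentially weighted ones:
\[
\|x\|_K^2:=\mathbb{E}\int_0^T e^{Kt}|x_t|^2\,dt,\qquad \|(Y,Z)\|_K^2:=\mathbb{E}\int_0^T e^{Kt}\big(|Y_t|^2+|Z_t|^2\big)\,dt,
\]
with $K$ the constant of (C)(i). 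These norms are equivalent to the $L^2$ norms on $[0,T]$. We then upgrade to $\mathcal{S}^2$ at the end via Propositions \ref{22lut2} and \ref{20lut19}.

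\textbf{Backward estimate.} Fix $x^1,x^2$ with $(Y^i,Z^i)=\psi(x^i)$. Apply the change-of-variables formula of \cite[Corollary 5.5]{KRS1} to $e^{Kt}|Y^1_t-Y^2_t|^2$, exactly as in Step 2 of Proposition \ref{20lut19}. The reflection terms are nonpositive by the minimality condition, as in \eqref{20lut14}, since both solutions share the barriers $L,U$. Expand the generator difference using (B1), (B2) and Young's inequality with weights $1,\tfrac12$ on the $x$ and $z$ terms and $\alpha$ on the Wasserstein term. This gives
\[
\mathbb{E}\Big[e^{Kt}|\delta Y_t|^2+\int_t^T e^{Kr}\big((-K-2\mu-\lambda_x^2-2\lambda_z^2-\alpha\rho^2)|\delta Y_r|^2+\tfrac12|\delta Z_r|^2\big)dr\Big]\le \mathbb{E}e^{KT}|\delta h|^2+\int_0^T e^{Kr}\big(|\delta x_r|^2+\tfrac{1}{\alpha}\mathcal{W}_2^2(\mathcal{L}^1_r,\mathcal{L}^2_r)\big)dr .
\]
The stochastic integral is a true martingale by BDG, as in the proof of Proposition \ref{20lut19}. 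By the left inequality in (C)(i), the $|\delta Y|^2$ coefficient is positive. Two further bounds close this estimate. First, $\mathcal{W}_2^2(\mathcal{L}^1_r,\mathcal{L}^2_r)\le \mathbb{E}(|\delta x_r|^2+|\delta Y_r|^2+|\delta Z_r|^2)$, by coupling as in Remark \ref{21lut2}. Second, by (B1'), $|\delta h|^2\le 2l^2\cdot 2\,|\delta x_T|^2$, roughly. The condition $(T\vee1)/\alpha<1/20$ lets us absorb the $\tfrac1\alpha(|\delta Y|^2+|\delta Z|^2)$ part into the left side. The term at time $T$ is the delicate one: $\|\cdot\|_K$ does not control $x_T$. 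I would therefore carry $\mathbb{E}e^{KT}|\delta x_T|^2$ separately and control it through the forward estimate.

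\textbf{Forward estimate.} For $X^i=\phi(Y^i,Z^i)$, apply the same computation to $e^{-Kt}|X^1_t-X^2_t|^2$, which is a finite-variation process with $\delta\xi=0$. By (A1) and Young's inequality with weight $\beta$, the right inequality in (C)(i) makes the $|\delta X|^2$ coefficient on the left positive. This yields a bound of $\sup_t e^{-Kt}\mathbb{E}|\delta X_t|^2$ and $\int e^{-Kr}\mathbb{E}|\delta X_r|^2dr$ by $\tfrac{1}{\beta}$ times weighted integrals of $|\delta Y|^2+|\delta Z|^2$, plus a $\mathcal{W}$ term containing $\delta X$ again. That $\mathcal{W}$ term is absorbed using $(T\vee1)/\beta<1$, which produces the factor $(\beta-(T\vee1))^{-1}$.

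\textbf{Closing and main obstacle.} The forward and backward equations naturally carry opposite weights, $e^{-Kt}$ versus $e^{Kt}$. The main obstacle is to make them fit into a single contraction, including the terminal term $x_T$. My first attempt is to use one weight, $e^{-Kt}$ for $X$ and $e^{Kt}$ for $(Y,Z)$, and to measure $x$ in the norm $\sup_t e^{-Kt}\mathbb{E}|x_t|^2+\int_0^T e^{-Kt}\mathbb{E}|x_t|^2dt$ multiplied by $(T\vee 1)$. I would then check that the composite constant equals
\[
\frac{2(T\vee1)\cdot 20\,(2l+(T\vee1)+\tfrac1{20})}{\beta-(T\vee1)},
\]
which (C)(ii) makes $<1$; the factor $20$ comes from inverting $1-\tfrac{T\vee1}{\alpha}>\tfrac{19}{20}$. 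Banach's theorem then gives a unique fixed point in this complete weighted space, and the associated $R$ comes from $\psi$. Since the weights are bounded on $[0,T]$, this fixed point lies in $\mathcal{S}^2\times\mathcal{H}^2$. Uniqueness in the stated class follows because any solution is a fixed point of $\phi\circ\psi$.
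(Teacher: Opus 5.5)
There is a genuine gap in your closing step, and it comes from your choice of weights. If you really weight the backward difference by $e^{Kt}$, the change of variables puts $+K\int_t^T e^{Kr}|\delta Y_r|^2\,dr$ on the left. The $|\delta Y|^2$ coefficient is then $K-2\mu-\lambda_x^2-2\lambda_z^2-\alpha\rho^2$, not the $-K-2\mu-\dots$ you display. Your displayed inequality, and your appeal to the left inequality of (C)(i), are what one obtains with the weight $e^{-Kt}$.

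More seriously, with $e^{Kt}$ on $(Y,Z)$ and $e^{-Kt}$ on $X$ the two estimates do not compose into the constant you quote. The backward bound involves $\mathbb{E}e^{KT}|\delta x_T|^2$ and $\mathbb{E}\int_0^T e^{Kr}|\delta x_r|^2\,dr$. The forward estimate, and your proposed norm on $x$, only control $e^{-Kt}\mathbb{E}|\delta x_t|^2$. Passing from one to the other costs a factor $e^{2KT}$, so your composite constant is of order $e^{2KT}/\beta$, times constants depending on $l$ and $T$. Since (C)(i) forces $K>\beta(L_y^2+L_z^2+\gamma^2)$, this is not small in general, and no contraction follows. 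The constant you say you ``would check'' is simply the one in (C)(ii); nothing in your argument produces it. For instance, $1/(1-\tfrac{T\vee1}{\alpha})$ is about $20/19$, not $20$.

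The missing idea is that (C)(i) is designed so that opposite weights are not needed: use the same decaying weight $e^{-Kt}$ for both equations.
\begin{itemize}
\item For the backward equation, this produces $-K\int_t^T e^{-Kr}|\delta Y_r|^2\,dr$ on the left. That term is compensated exactly by the strong monotonicity $-2\mu-\lambda_x^2-2\lambda_z^2-\alpha\rho^2>K$.
\item For the forward equation, the same $K$ dominates $2L_x+\beta(L_y^2+L_z^2+\gamma^2)$.
\end{itemize}
Work on the space with norm $(\mathbb{E}\sup_t e^{-Kt}|x_t|^2)^{1/2}$ rather than an $L^2(dt)$-type norm. Then the terminal term is harmless: by (B1'), $\mathbb{E}e^{-KT}|\delta h|^2$ is bounded by a multiple of $\mathbb{E}\sup_t e^{-Kt}|\delta x_t|^2$.

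The backward bound carries the factor $10$ from It\^o's formula plus BDG, which becomes $20$ after absorbing the $\tfrac{T\vee1}{\alpha}$ Wasserstein term. The forward bound carries the factor $2(T\vee1)/(\beta-(T\vee1))$. Composing the two gives exactly the constant in (C)(ii). This is the paper's argument. The ``main obstacle'' you identify is created by the mismatch of weights and by a norm that does not see $x_T$. Had you kept $e^{-Kt}$ in the backward step, which is what your displayed inequality actually computes, the obstacle would disappear.
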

\begin{proof}
Under these additional assumptions, we again show that $\phi\circ\psi$ is a contraction.
By It\^o's formula, for $K>0$,
\begin{equation}\label{24czerwca1}
\begin{split}
&e^{-Kt}|Y^1_t-Y^2_t|^2+\int^T_t e^{-Kr}|Z^1_r-Z^2_r|^2\,dr+(-K)\int^T_t e^{-Kr}|Y^1_r-Y^2_r|^2\,dr\\
&\quad\le|h(x^1_T,\mathcal{L}(x^1_T))-h(x^2_T,\mathcal{L}(x^2_T))|^2+2\int^T_t e^{-Kr}(Y^1_r-Y^2_r)(f(r,x^1_r,Y^1_r,Z^1_r,\mathcal{L}(x^1_r,Y^1_r,Z^1_r))\\
&\quad\quad-f(r,x^2_r,Y^2_r,Z^2_r,\mathcal{L}(x^2_r,Y^2_r,Z^2_r)))\,dr+2\int^T_t e^{-Kr}(Y^1_{r-}-Y^2_{r-})\,d(R^1_r-R^2_r)^*\\
&\quad\quad+\sum_{t\le r<T}e^{-Kr}(Y^1_r-Y^2_r)\,\Delta^+(R^1_r-R^2_r)-2\int^T_t e^{-Kr}(Y^1_r-Y^2_r)(Z^1_r-Z^2_r)\,dW_r 
\end{split}
\end{equation}
By \textnormal{(B1)} and \textnormal{(B2)},
\begin{equation}\label{24czerwca2}
\begin{split}
&2(Y^1_r-Y^2_r)(f(r,x^1_r,Y^1_r,Z^1_r,\mathcal{L}(x^1_r,Y^1_r,Z^1_r))-f(r,x^2_r,Y^2_r,Z^2_r,\mathcal{L}(x^2_r,Y^2_r,Z^2_r)))\\
&\quad\le 2\mu|Y^1_r-Y^2_r|^2+\lambda^2_x|Y^1_r-Y^2_r|^2+|x^1_r-x^2_r|^2+2\lambda^2_z|Y^1_r-Y^2_r|^2+\frac{1}{2}|Z^1_r-Z^2_r|^2\\
&\quad+\alpha\rho^2|Y^1_r-Y^2_r|^2+\frac{1}{\alpha}\mathcal{W}^2_2(\mathcal{L}(x^1_r,Y^1_r,Z^1_r),\mathcal{L}(x^2_r,Y^2_r,Z^2_r))
\end{split}
\end{equation}
Moreover, by the minimality condition,
\begin{equation}\label{24czerwca3}
\int^T_t e^{-Kr}(Y^1_{r-}-Y^2_{r-})\,d(R^1_r-R^2_r)^*+\sum_{t\le r<T}e^{-Kr}(Y^1_r-Y^2_r)\,\Delta^+(R^1_r-R^2_r)\le 0.
\end{equation}
Using \eqref{24czerwca1}, \eqref{24czerwca2}, the assumption $-2\mu-\lambda^2_x-2\lambda^2_z-\alpha\rho^2>K$, and the fact that $\Big\{\int^{\cdot}_0 e^{-Kr}(Y^1_r-Y^2_r)(Z^1_r-Z^2_r)\,dW_r\Big\}$ is a martingale, we obtain
\begin{equation}\label{24czerwca4}
\begin{split}
&E\int^T_0 e^{-Kr}|Z^1_r-Z^2_r|^2\,dr\le2\Bigg(|h(x^1_T,\mathcal{L}(x^1_T))-h(x^2_T,\mathcal{L}(x^2_T))|^2+E\int^T_0 e^{-Kr}|x^1_r-x^2_r|^2\,dr\\
&\quad+\frac{1}{\alpha}E\int^T_0 e^{-Kr}\mathcal{W}^2_2(\mathcal{L}(x^1_r,Y^1_r,Z^1_r),\mathcal{L}(x^2_r,Y^2_r,Z^2_r))\,dr\Bigg).
\end{split}
\end{equation}
Similarly, \eqref{24czerwca1}, \eqref{24czerwca2}, \eqref{24czerwca3}, and the Burkholder-Davis-Gundy inequality give
\begin{equation}\label{24czerwca5}
\begin{split}
&E\sup_{0\le t\le T}e^{-Kt}|Y^1_t-Y^2_t|^2\le|h(x^1_T,\mathcal{L}(x^1_T))-h(x^2_T,\mathcal{L}(x^2_T))|^2+E\int^T_0 e^{-Kr}|x^1_r-x^2_r|^2\,dr\\
&\quad+\frac{1}{\alpha}E\int^T_0 e^{-Kr}\mathcal{W}^2_2(\mathcal{L}(x^1_r,Y^1_r,Z^1_r),\mathcal{L}(x^2_r,Y^2_r,Z^2_r))\,dr\\
&\quad+E\sup_{0\le t\le T}\Big|\int^T_t e^{-Kr}(Y^1_r-Y^2_r)(Z^1_r-Z^2_r)\,dW_r\Big|\le|h(x^1_T,\mathcal{L}(x^1_T))-h(x^2_T,\mathcal{L}(x^2_T))|^2\\
&\quad+E\int^T_0 e^{-Kr}|x^1_r-x^2_r|^2\,dr+\frac{1}{\alpha}E\int^T_0 e^{-Kr}\mathcal{W}^2_2(\mathcal{L}(x^1_r,Y^1_r,Z^1_r),\mathcal{L}(x^2_r,Y^2_r,Z^2_r))\,dr\\
&\quad+\frac{1}{2}E\sup_{0\le t\le T}e^{-Kt}|Y^1_t-Y^2_t|^2+2E\int^T_0 e^{-Kr}|Z^1_r-Z^2_r|^2\,dr.
\end{split}
\end{equation}
Combining \eqref{24czerwca4} and \eqref{24czerwca5} gives
\begin{equation}\label{24czerwca6}
\begin{split}
&E\sup_{0\le t\le T}e^{-Kt}|Y^1_t-Y^2_t|^2+E\int^T_0 e^{-Kr}|Z^1_r-Z^2_r|^2\,dr\le10\Bigg(|h(x^1_T,\mathcal{L}(x^1_T))-h(x^2_T,\mathcal{L}(x^2_T))|^2\\
&\quad+E\int^T_0 e^{-Kr}|x^1_r-x^2_r|^2\,dr+\frac{1}{\alpha}E\int^T_0 e^{-Kr}\mathcal{W}^2_2(\mathcal{L}(x^1_r,Y^1_r,Z^1_r),\mathcal{L}(x^2_r,Y^2_r,Z^2_r))\,dr\Bigg).
\end{split}
\end{equation}
Since
\begin{equation}\label{24czerwca7}
\begin{split}
&E\int^T_0 e^{-Kr}\mathcal{W}^2_2(\mathcal{L}(x^1_r,Y^1_r,Z^1_r),\mathcal{L}(x^2_r,Y^2_r,Z^2_r))\,dr\le(T\vee 1)\Big(E\sup_{0\le t\le T}e^{-Kt}|Y^1_t-Y^2_t|^2\\
&\quad+E\int^T_0 e^{-Kr}|Z^1_r-Z^2_r|^2\,dr+E\sup_{0\le t\le T}e^{-Kt}|x^1_t-x^2_t|^2\Big)
\end{split}
\end{equation}
and by \textnormal{(B1')},
\begin{equation}\label{24czerwca8}
\begin{split}
\mathbb{E}e^{-KT}|h(x^1_T,\mathcal{L}(x^1_T))-h(x^2_T,\mathcal{L}(x^2_T))|^2\le 2l\mathbb{E}\sup_{0\le t\le T}e^{-Kt}|x^1_t-x^2_t|^2,
\end{split}
\end{equation}
Using \eqref{24czerwca6}, \eqref{24czerwca7}, and \eqref{24czerwca8}, and since $\frac{T\vee 1}{\alpha}\le\frac{1}{20}$, we have
\begin{equation}\label{24czerwca9}
\begin{split}
&E\sup_{0\le t\le T}e^{-Kt}|Y^1_t-Y^2_t|^2+E\int^T_0 e^{-Kr}|Z^1_r-Z^2_r|^2\,dr\le 20\big(2l+(T\vee 1)+\frac{1}{20}\big)E\sup_{0\le t\le T}e^{-Kt}|x^1_t-x^2_t|^2.
\end{split}
\end{equation}
Next, another application of It\^o's formula gives
\begin{equation}\label{27czerwca1}
\begin{split}
&e^{-Kt}|X^1_t-X^2_t|^2+K\int^t_0 e^{-Kr}|X^1_r-X^2_r|^2\,dr\\
&\quad=2\int^t_0e^{-Kr}(X^1_r-X^2_r)
\big(B(r,X^1_r,Y^1_r,Z^1_r,\mathcal{L}(X^1_r,Y^1_r,Z^1_r))\\
&\quad\quad-B(r,X^2_r,Y^2_r,Z^2_r,\mathcal{L}(X^2_r,Y^2_r,Z^2_r))\big)\,dr.
\end{split}
\end{equation}
By \textnormal{(A1)},
\begin{equation}\label{27czerwca2}
\begin{split}
&2(X^1_r-X^2_r)(B(r,X^1_r,Y^1_r,Z^1_r,\mathcal{L}(X^1_r,Y^1_r,Z^1_r))-B(r,X^2_r,Y^2_r,Z^2_r,\mathcal{L}(X^2_r,Y^2_r,Z^2_r))\\
&\quad\le 2L_x|X^1_r-X^2_r|^2+\beta L^2_y|X^1_r-X^2_r|^2+\frac{1}{\beta}|Y^1_r-Y^2_r|^2+\beta L^2_z|X^1_r-X^2_r|^2+\frac{1}{\beta}|Z^1_r-Z^2_r|^2\\
&\quad+\beta \gamma^2|X^1_r-X^2_r|^2+\frac{1}{\beta}\mathcal{W}^2_2(\mathcal{L}(X^1_r,Y^1_r,Z^1_r),\mathcal{L}(X^2_r,Y^2_r,Z^2_r))
\end{split}
\end{equation}
Combining \eqref{27czerwca1} and \eqref{27czerwca2}, and using $K>2L_x+\beta(L^2_y+L^2_z+\gamma^2)$, we obtain
\begin{equation}\label{27czerwca3}
\begin{split}
&E\sup_{0\le t\le T}e^{-Kt}|X^1_t-X^2_t|^2\\
&\quad\le \frac{T\vee 1}{\beta}
\Big(E\sup_{0\le t\le T}e^{-Kt}|Y^1_t-Y^2_t|^2
+E\int_0^T e^{-Kr}|Z^1_r-Z^2_r|^2\,dr\Big)\\
&\qquad
+\frac{1}{\beta}E\int_0^T e^{-Kr}
\mathcal{W}^2_2\big(\mathcal{L}(X^1_r,Y^1_r,Z^1_r),\mathcal{L}(X^2_r,Y^2_r,Z^2_r)\big)\,dr
\end{split}
\end{equation}
Using \eqref{27czerwca3}, arguing as in \eqref{24czerwca7}, and since $\frac{T\vee 1}{\beta}<1$,
\begin{equation}\label{27czerwca4}
\begin{split}
&\bigg(\frac{\beta-(T\vee 1)}{\beta}\bigg)
E\sup_{0\le t\le T}e^{-Kt}|X^1_t-X^2_t|^2\\
&\quad\le \frac{2(T\vee 1)}{\beta}
\Big(E\sup_{0\le t\le T}e^{-Kt}|Y^1_t-Y^2_t|^2
+E\int_0^T e^{-Kr}|Z^1_r-Z^2_r|^2\,dr\Big)
\end{split}
\end{equation}
Combining \eqref{27czerwca4} with \eqref{24czerwca9} yields
\begin{equation*}
\begin{split}
&E\sup_{0\le t\le T}e^{-Kt}|X^1_t-X^2_t|^2\\
&\quad\le
\frac{2(T\vee 1)\cdot 20\big(2l+(T\vee 1)+\frac{1}{20}\big)}{\beta-(T\vee 1)}
E\sup_{0\le t\le T}e^{-Kt}|x^1_t-x^2_t|^2,
\end{split}
\end{equation*}
where, by assumption, $\frac{2(T\vee 1)\cdot 20(2l+(T\vee 1)+\frac{1}{20})}{\beta-(T\vee 1)}<1$. Hence, $\phi\circ\psi$ is a contraction on $S^{2,K}_{\mathbb{F}}(0,T):=\{X\in\mathcal{S}_{\mathbb{F}}(0,T);\,E\sup_{0\le t\le T}e^{-Kt}|X_t|^2<\infty\}$ with the norm
\[
||X||_{S^{2,K}_{\mathbb{F}}(0,T)}:=\Big(E\sup_{0\le t\le T}e^{-Kt}|X_t|^2\Big)^{\frac{1}{2}}.
\]
Banach's fixed-point theorem yields a unique process $X\in\mathcal{S}^2_{\mathbb{F}}(0,T)$ such that $\psi(X)=(Y,Z)\in\mathcal{S}^2_{\mathbb{F}}(0,T)\otimes\mathcal{H}^2_{\mathbb{F}}(0,T)$ and $\phi(Y,Z)=X$. By the definition of $\psi$, there exists $R\in\mathcal{V}^2_{0,\mathbb{F}}(0,T)$ such that $(X,Y,Z,R)$ is a solution to MF-FBSDER$^T(\xi,h,B,\sigma,f,L,U)$.
\end{proof}

\section*{Acknowledgments}

This work was initiated during Maurycy Rzymowski's 2023 research visit to the Department of Mathematics at the University of Michigan. The visit was fully funded by Nicolaus Copernicus University in Toru\'n through its ``Mobility for Employees'' competition, conducted under the ``Excellence Initiative--Research University'' program.


\end{document}